\newtheorem{theorem}{Theorem}
\newtheorem{corollary}[theorem]{Corollary}
\newtheorem{definition}[theorem]{Definition}
\newtheorem{remark}[theorem]{Remark}
\newenvironment{proof}[1][Proof]{\noindent\textbf{#1.} }{\ \rule{0.5em}{0.5em}}
\begin{document}

\title{Majorization under constraints and bounds of the second Zagreb index}
\author{\emph{Monica Bianchi $^{\alpha}$} \thanks{%
e-mail: monica.bianchi@unicatt.it} \and \emph{Alessandra Cornaro $^{\alpha}$}
\thanks{%
Corresponding author, e-mail: alessandra.cornaro@unicatt.it,
Tel:+390272343683, Fax:+390272342324} \and \emph{Anna Torriero$^{\alpha}$} 
\thanks{%
e-mail: anna.torriero@unicatt.it} }
\date{ }
\maketitle

\begin{abstract}
In this paper we present a theoretical analysis in order to establish
maximal and minimal vectors with respect to the majorization order of
particular subsets of $\Re ^{n}.$ Afterwards we apply these issues to the
calculation of bounds for a topological descriptor of a graph known as the
second Zagreb index. Finally, we show how our bounds may improve the results
obtained in the literature, providing some theoretical and numerical
examples.

AMS Classification: 05C35, 05C05, 05C50

\textbf{Keywords}: Majorization; Schur-convex functions; Graphs; Second
Zagreb index.
\end{abstract}

\centerline{$^{\alpha}$ Department of Mathematics and Econometrics, Catholic University, Milan,
Italy.}

\section{Introduction}

The notion of majorization ordering was introduced by Hardy, Littlewood and
Polya (\cite{Hardy}) and is closely connected with the economic theory of
disparity indices (\cite{Atkinson}). But this concept can first be found in
Schur (\cite{Schur}) who investigated functions which preserve the
majorization order, the so-called Schur-convex functions. Using this
property and characterizing maximal and minimal vectors with respect to
majorization order under suitable constraints, many inequalities involving
such functions can be derived (\cite{Marshall}). A significant application
of this approach concerns the localization of ordered sequences of real
numbers as they occur in the problem of finding estimates of eigenvalues of
a matrix (\cite{Bianchi}, \cite{Pan}, \cite{Tarazaga} and \cite{Tarazaga2}).
Another field of interest concerns the network analysis, where the same
methodology can be useful applied in order to provide bounds for some
topological indicators of graphs which can be usefully expressed as a
Schur-convex function, in terms of the degree sequence of the graph (see 
\cite{Grassi}).

\noindent In this paper, after some preliminary definitions and notations,
we perform a theoretical analysis aimed at determining maximal and minimal
vectors with respect to the majorization order of suitable subsets of $\Re
^{n}$. In Section 3 and 4 we extend the results, obtained by Marshall and
Olkin \cite{Marshall} into more specific sets of constraints determining
their extremal elements. In Section 5, we provide an application of these
results, dealing with the problem of computing bounds for the second Zagreb
index, $M_{2}(G)$ of a particular class of graphs with a given number of
pendant vertices. This index is extensively studied in graph theory, as a
chemical molecular structure descriptor (\cite{Gutman2}, \cite{Gutman3}, 
\cite{Gutman4}, \cite{Nikolic} and \cite{Todeschini}) and, more generally,
in network analysis, as a measure of degree-assortativity, quantifying how
well a network is connected, (\cite{Alderson}, \cite{Li} and \cite{Li2}). In
the latter context the Zagreb index $M_{2}(G)$ is renamed $S(G).$ Since
determining $S(G)$ requires a specific algorithm (\cite{Li}), many bounds
have been proposed in the literature (\cite{Bollobas}, \cite{Das-Gut-09}, 
\cite{Lu}, \cite{Yan} and \cite{Zhao}). Recently Grassi et al. in \cite%
{Grassi} obtained different bounds through a majorization technique. Using
this approach, we derive new bounds in terms of graph degree sequence and
present some theoretical and numerical examples comparing our results with
the literature. Our conclusions are presented in Section 6. %
%
%

\section{Notations and preliminaries}

Let $\mathbf{e}^{\mathbf{j}},$ $j=1,...n,$ be the fundamental vectors of $%
\mathbb{R}
^{n}$ and set:%
\begin{align*}
\mathbf{s}^{\mathbf{0}}=\mathbf{0},& \text{ }\mathbf{s}^{\mathbf{j}%
}=\dsum\limits_{i=1}^{j}\mathbf{e}^{\mathbf{i}},\text{ }j=1,...n, \\
\mathbf{v}^{\mathbf{n}}=\mathbf{0},& \text{ }\mathbf{v}^{\mathbf{j}%
}=\dsum\limits_{i=j+1}^{n}\mathbf{e}^{\mathbf{i}},\text{ }j=0,...(n-1).
\end{align*}%
Recalling that the Hadamard product of two vectors $\mathbf{x,y}\in \mathbb{R%
}^{n}$ is defined as follows: 
\begin{equation*}
\mathbf{x}\circ \mathbf{y}=\left[ x_{1}y_{1},x_{2}y_{2},...,x_{n}y_{n}\right]
^{T}
\end{equation*}%
it is easy to verify the following properties, where $\left\langle \cdot
,\cdot \right\rangle $ denotes the inner product in $%
\mathbb{R}
^{n}$:

\begin{itemize}
\item[i)] $\left\langle \mathbf{x\circ y,z}\right\rangle =\left\langle 
\mathbf{x,y\circ z}\right\rangle $

\item[ii)] $\langle \mathbf{s^{h}},\mathbf{v}^{\mathbf{k}}\rangle =h-\min
\left\{ h,k\right\} $

\item[iii)] $\mathbf{s}^{\mathbf{k}}\circ \mathbf{s}^{\mathbf{j}}=\mathbf{%
s^{h}},$ $h=\min \left\{ k,j\right\} $

\item[iv)] $\mathbf{v}^{\mathbf{k}}\circ \mathbf{s}^{\mathbf{j}}=\mathbf{s}^{%
\mathbf{j}}-\mathbf{s^{h}}=\mathbf{v}^{\mathbf{h}}-\mathbf{v}^{\mathbf{j}},$ 
$h=\min \left\{ k,j\right\} $
\end{itemize}

\begin{definition}
Assuming that the components of the vectors $\mathbf{x}$, $\mathbf{y\in }$ $%
\mathbb{R}
^{n}$ are arranged in nonincreasing order, the majorization order $\mathbf{x}%
\trianglelefteq \mathbf{y}$ means:%
\begin{equation*}
\left\langle \mathbf{x},\mathbf{s}^{\mathbf{k}}\right\rangle \leq
\left\langle \mathbf{y},\mathbf{s}^{\mathbf{k}}\right\rangle ,\text{ }%
k=1,...,(n-1)
\end{equation*}%
and%
\begin{equation*}
\left\langle \mathbf{x},\mathbf{s}^{\mathbf{n}}\right\rangle =\left\langle 
\mathbf{y},\mathbf{s}^{\mathbf{n}}\right\rangle .
\end{equation*}
\end{definition}

\noindent In the sequel $\mathbf{x}^{\ast }(S)$ and $\mathbf{x}_{\ast }(S)$
will denote the maximal and the minimal elements of a subset $S\subseteq 
\mathbb{R}^{n}$ with respect to the majorization order.

\noindent Given a positive real number $a$, it is well known \cite{Marshall}
that the maximal and the minimal elements of the set 
\begin{equation*}
\Sigma _{a}=\left\{ \mathbf{x\in }\text{ }%
\mathbb{R}
^{n}:x_{1}\geq x_{2}\geq ...\geq x_{n}\geq 0,\left\langle \mathbf{x},\mathbf{%
s}^{\mathbf{n}}\right\rangle =a\right\}
\end{equation*}%
with respect to the majorization order are respectively%
\begin{equation*}
\mathbf{x}^{\ast }\left( \Sigma _{a}\right) =a\mathbf{e}^{1}\text{ \ \ \ \ \
\ \ \ \ and \ \ \ \ \ \ \ \ \ \ }\mathbf{x}_{\ast }\left( \Sigma _{a}\right)
=\frac{a}{n}\mathbf{s}^{\mathbf{n}}.
\end{equation*}

\noindent Next sections are dedicated to the study of the maximal and the
minimal elements, with respect to the majorization order, of the particular
subset of $\Sigma _{a}$ given by 
\begin{equation}
S_{a}=\Sigma _{a}\cap \left\{ \mathbf{x\in }\text{ }%
\mathbb{R}
^{n}:M_{i}\geq x_{i}\geq m_{i},\text{ }i=1,...n\right\} ,  \label{Sa}
\end{equation}%
where $\mathbf{m}=\left[ m_{1},m_{2},...,m_{n}\right] ^{T}$ and $\mathbf{M}=%
\left[ M_{1},M_{2},...,M_{n}\right] ^{T}$ are two assigned vectors arranged
in nonincreasing order with $0\leq m_{i}\leq M_{i},$ for all $i=1,...n,$ and 
$a$ is a positive real number such that $\left\langle \mathbf{m},\mathbf{s}^{%
\mathbf{n}}\right\rangle \leq a\leq \left\langle \mathbf{M},\mathbf{s}^{%
\mathbf{n}}\right\rangle .$ Notice that the intervals $\left[ m_{i},M_{i}%
\right] $ are not necessarily disjointed unless the additional assumption $%
M_{i+1}<m_{i},$ $i=1,...,(n-1)$ is required. The existence of maximal and
minimal elements of $S_{a}$ are ensured by the compactness of the set $S_{a}$
and by the closure of the upper and level sets:%
\begin{equation*}
U(\mathbf{x})=\left\{ \mathbf{z\in }\text{ }S_{a}:\mathbf{x\trianglelefteq z}%
\right\} ,\text{ }L(\mathbf{x})=\left\{ \mathbf{z\in }\text{ }S_{a}:\mathbf{%
z\trianglelefteq x}\right\}
\end{equation*}


%
%
%
%
%
%

\section{The maximal element of $S_{a}$}

We start computing the maximal element, with respect to the majorization
order, of the set $S_a$.


\begin{theorem}
Let $k\geq 0$ be the smallest integer such that 
\begin{equation}
\left\langle \mathbf{M},\mathbf{s}^{\mathbf{k}}\right\rangle +\left\langle 
\mathbf{m},\mathbf{v}^{\mathbf{k}}\right\rangle \leq a<\left\langle \mathbf{M%
},\mathbf{s}^{k+1}\right\rangle +\left\langle \mathbf{m},\mathbf{v}^{\mathbf{%
k+1}}\right\rangle ,  \label{dis1}
\end{equation}%
and $\theta =a-\left\langle \mathbf{M},\mathbf{s}^{\mathbf{k}}\right\rangle
-\left\langle \mathbf{m},\mathbf{v}^{\mathbf{k+1}}\right\rangle .$ Then 
\begin{equation}
\mathbf{x}^{\ast }(S_{a})=\mathbf{M\circ s}^{\mathbf{k}}+\theta \mathbf{e}%
^{k+1}+\mathbf{m\circ v}^{\mathbf{k+1}}  \label{xsa}
\end{equation}
\end{theorem}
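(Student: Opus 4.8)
The plan is to verify that the candidate vector $\mathbf{x}^{\ast}(S_a)$ defined by \eqref{xsa} (i) lies in $S_a$, (ii) has nonincreasing components, and (iii) majorizes every other vector of $S_a$. First I would check membership. By construction the first $k$ coordinates equal $M_1,\dots,M_k$, the $(k+1)$-st equals $\theta$, and the last $n-k-1$ equal $m_{k+2},\dots,m_n$; so I must show $m_{k+1}\le\theta\le M_{k+1}$. Using property (ii) for the inner products $\langle\mathbf{M},\mathbf{s}^{\mathbf{k}}\rangle$, $\langle\mathbf{m},\mathbf{v}^{\mathbf{k}}\rangle$ and $\langle\mathbf{m},\mathbf{v}^{\mathbf{k+1}}\rangle$, the left inequality in \eqref{dis1} rearranges exactly to $\theta\ge m_{k+1}$, while the right inequality rearranges to $\theta<M_{k+1}$; the bound-constraint $M_i\ge x_i\ge m_i$ then holds coordinatewise because $M$ and $m$ are nonincreasing and the intervals nest appropriately. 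The sum constraint $\langle\mathbf{x}^{\ast}(S_a),\mathbf{s}^{\mathbf{n}}\rangle=a$ follows by adding the three pieces and using the definition of $\theta$. For the ordering, I need $M_k\ge\theta\ge m_{k+2}$; the first follows from $\theta<M_{k+1}\le M_k$ and the second from $\theta\ge m_{k+1}\ge m_{k+2}$, so the concatenated vector is genuinely nonincreasing and Definition 1 applies without re-sorting.

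The heart of the argument is step (iii): for an arbitrary $\mathbf{x}\in S_a$ I must show $\langle\mathbf{x},\mathbf{s}^{\mathbf{h}}\rangle\le\langle\mathbf{x}^{\ast}(S_a),\mathbf{s}^{\mathbf{h}}\rangle$ for each $h=1,\dots,n-1$ (equality at $h=n$ is automatic since both sum to $a$). Here I split into the cases $h\le k$ and $h\ge k+1$. For $h\le k$ we simply have $\langle\mathbf{x},\mathbf{s}^{\mathbf{h}}\rangle=\sum_{i=1}^h x_i\le\sum_{i=1}^h M_i=\langle\mathbf{x}^{\ast}(S_a),\mathbf{s}^{\mathbf{h}}\rangle$, using the upper bounds $x_i\le M_i$. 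For $h\ge k+1$ I instead bound the complementary tail from below: $\langle\mathbf{x},\mathbf{v}^{\mathbf{h}}\rangle=\sum_{i=h+1}^n x_i\ge\sum_{i=h+1}^n m_i=\langle\mathbf{m},\mathbf{v}^{\mathbf{h}}\rangle$, and since $\langle\mathbf{x},\mathbf{s}^{\mathbf{h}}\rangle=a-\langle\mathbf{x},\mathbf{v}^{\mathbf{h}}\rangle$ this gives $\langle\mathbf{x},\mathbf{s}^{\mathbf{h}}\rangle\le a-\langle\mathbf{m},\mathbf{v}^{\mathbf{h}}\rangle$; it then remains to check that this equals $\langle\mathbf{x}^{\ast}(S_a),\mathbf{s}^{\mathbf{h}}\rangle$, which is a direct computation using \eqref{xsa} together with properties (ii)–(iv) of the Hadamard product and the definition of $\theta$.

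The main obstacle I anticipate is bookkeeping at the junction index $h=k$ versus $h=k+1$: one has to make sure the two bounding strategies agree with the partial sums of $\mathbf{x}^{\ast}(S_a)$ exactly (not just up to slack), in particular that at $h=k$ the value $\langle\mathbf{x}^{\ast}(S_a),\mathbf{s}^{\mathbf{k}}\rangle=\langle\mathbf{M},\mathbf{s}^{\mathbf{k}}\rangle$ and at $h=k+1$ the value $\langle\mathbf{x}^{\ast}(S_a),\mathbf{s}^{\mathbf{k+1}}\rangle=\langle\mathbf{M},\mathbf{s}^{\mathbf{k}}\rangle+\theta$ are both reproduced by the "subtract the tail from $a$" formula. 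This reduces to the identity $\langle\mathbf{x}^{\ast}(S_a),\mathbf{v}^{\mathbf{h}}\rangle=\langle\mathbf{m},\mathbf{v}^{\mathbf{h}}\rangle$ for $h\ge k+1$, which is immediate from \eqref{xsa} since the tail of $\mathbf{x}^{\ast}(S_a)$ beyond index $k+1$ is literally $\mathbf{m}\circ\mathbf{v}^{\mathbf{k+1}}$. A secondary point worth stating explicitly is the minimality of $k$: the definition of $k$ as the \emph{smallest} integer satisfying \eqref{dis1} is what guarantees $\theta\ge m_{k+1}$ rather than merely $\theta\ge 0$, and hence that $\mathbf{x}^{\ast}(S_a)$ respects the lower bounds; I would remark on this to make the role of \eqref{dis1} transparent. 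Once these identities are in hand, $\mathbf{x}\trianglelefteq\mathbf{x}^{\ast}(S_a)$ for all $\mathbf{x}\in S_a$, so $\mathbf{x}^{\ast}(S_a)$ is the majorization-maximal element.
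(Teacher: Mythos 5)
Your proposal is correct and follows essentially the same path as the paper's proof: you verify membership of the candidate in $S_a$ via $m_{k+1}\le\theta<M_{k+1}$ deduced from (\ref{dis1}), and then establish majorization by splitting the partial sums at index $k$, bounding $\langle\mathbf{x},\mathbf{s}^{\mathbf{j}}\rangle$ by $\langle\mathbf{M},\mathbf{s}^{\mathbf{j}}\rangle$ for $j\le k$ and by $a-\langle\mathbf{m},\mathbf{v}^{\mathbf{j}}\rangle$ for $j\ge k+1$, exactly as the paper does. One minor quibble: $\theta\ge m_{k+1}$ follows from the left inequality of (\ref{dis1}) for the chosen $k$ (as you correctly derived first), not from the minimality of $k$, so your closing aside attributing it to minimality is superfluous.
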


\begin{proof}
First of all we verify that $\mathbf{x}^{\ast }\mathbf{(}S_{a}\mathbf{)}\in $
$S_{a}$. It easy to see that $\langle \mathbf{x}^{\ast }\mathbf{(}S_{a}%
\mathbf{)},$\textbf{$s^{n}$}$\rangle =a$ and that $m_{i}\leq \mathbf{x}%
_{i}^{\ast }\mathbf{(}S_{a}\mathbf{)\leq }M_{i}$ for $i\neq k+1.$ To prove
that $m_{k+1}\leq \mathbf{x}_{k+1}^{\ast }\mathbf{(}S_{a}\mathbf{)\leq }%
M_{k+1}$, notice that from (\ref{dis1}) 
\begin{equation*}
m_{k+1}=\left\langle \mathbf{m,e}^{\mathbf{k+1}}\right\rangle \leq
a-\left\langle \mathbf{M},\mathbf{s}^{\mathbf{k}}\right\rangle -\left\langle 
\mathbf{m},\mathbf{v}^{\mathbf{k+1}}\right\rangle =\theta <\left\langle 
\mathbf{M},\mathbf{e}^{\mathbf{k+1}}\right\rangle =M_{k+1}.
\end{equation*}

\noindent Now we show that $\mathbf{x}\trianglelefteq \mathbf{x}^{\ast
}(S_{a})$ for all $\mathbf{x\in }$ $S_{a}.$ By property i) follows 
\begin{equation*}
\left\langle \mathbf{x}^{\ast }(S_{a}),\mathbf{s}^{\mathbf{j}}\right\rangle
=\left\langle \mathbf{M},\mathbf{s}^{\mathbf{k}}\circ \mathbf{s}^{\mathbf{j}%
}\right\rangle +\theta \left\langle \mathbf{e}^{\mathbf{k+1}},\mathbf{s}^{%
\mathbf{j}}\right\rangle +\left\langle \mathbf{m},\mathbf{v}^{\mathbf{k+1}%
}\circ \mathbf{s}^{\mathbf{j}}\right\rangle ,\text{ }j=1,...(n-1)
\end{equation*}%
and by iii) and iv) 
\begin{equation*}
\left\langle \mathbf{x}^{\ast }(S_{a}),\mathbf{s}^{\mathbf{j}}\right\rangle
=\left\{ 
\begin{array}{cc}
\left\langle \mathbf{M},\mathbf{s}^{\mathbf{j}}\right\rangle & 1\leq j\leq k
\\ 
\left\langle \mathbf{M},\mathbf{s}^{\mathbf{k}}\right\rangle +\theta
+\left\langle \mathbf{m},\mathbf{s}^{\mathbf{j}}-\mathbf{s}^{\mathbf{k+1}%
}\right\rangle & (k+1)\leq j\leq (n-1)%
\end{array}%
\right. .
\end{equation*}

\noindent Thus, given a vector $\mathbf{x\in }$ $S_{a}$, for $1\leq j\leq k$
we obtain 
\begin{equation*}
\left\langle \mathbf{x},\mathbf{s}^{\mathbf{j}}\right\rangle \leq
\left\langle \mathbf{M},\mathbf{s}^{\mathbf{j}}\right\rangle =\left\langle 
\mathbf{x}^{\ast }(S_{a}),\mathbf{s}^{\mathbf{j}}\right\rangle ,
\end{equation*}%
while for $(k+1)\leq j\leq (n-1)$, by iii), 
\begin{equation*}
\left\langle \mathbf{x},\mathbf{s}^{\mathbf{j}}\right\rangle =\left\langle 
\mathbf{x},\mathbf{\mathbf{s}^{\mathbf{n}}}\right\rangle -\left\langle 
\mathbf{x},\mathbf{v}^{\mathbf{j}}\right\rangle \leq a-\left\langle \mathbf{m%
},\mathbf{v}^{\mathbf{j}}\right\rangle =\left\langle \mathbf{M},\mathbf{s}^{%
\mathbf{k}}\right\rangle +\theta +\left\langle \mathbf{m},\mathbf{s}^{%
\mathbf{j}}-\mathbf{s}^{\mathbf{k+1}}\right\rangle =\left\langle \mathbf{x}%
^{\ast }(S_{a}),\mathbf{s}^{\mathbf{j}}\right\rangle
\end{equation*}%
and the result follows.
\end{proof}

\bigskip

\noindent From this general result, the maximal element of particular
subsets of $S_{a}$ can be deduced. We then focus on a specific case which
will be useful in the application we deal with in Section 5. We denote by $%
\left\lfloor x\right\rfloor $ the integer part of the real number $x$.


\begin{corollary}
\label{cor:dueintervalli} Given $1\leq h\leq n$, let us consider the set 
\begin{equation}
S_{a}^{\left[ h\right] }=%
\begin{array}{c}
\Sigma _{a}\cap \left\{ \mathbf{x}\in \mathbb{R}^{n}:M_{1}\geq x_{1}\geq
...\geq x_{h}\geq m_{1},\right.  \\ 
\left. M_{2}\geq x_{h+1}\geq ...\geq x_{n}\geq m_{2}\right\} 
\end{array}%
,  \label{Sh}
\end{equation}%
where 
\begin{equation*}
0\leq m_{2}\leq m_{1},0\leq M_{2}\leq M_{1},m_{i}<M_{i},i=1,2
\end{equation*}%
and 
\begin{equation*}
hm_{1}+(n-h)m_{2}\leq a\leq hM_{1}+(n-h)M_{2}.
\end{equation*}

\noindent Let $a^{\ast }=hM_{1}+(n-h)m_{2}$ and 
\begin{equation*}
k=\left\{ 
\begin{array}{ccc}
\left\lfloor \dfrac{a-h(m_{1}-m_{2})-nm_{2}}{M_{1}-m_{1}}\right\rfloor  & 
\text{ if } & a<a^{\ast } \\ 
&  &  \\ 
\left\lfloor \dfrac{a-h(M_{1}-M_{2})-nm_{2}}{M_{2}-m_{2}}\right\rfloor  & 
\text{ if } & a\geq a^{\ast }%
\end{array}%
\right. 
\end{equation*}%
Then 
\begin{equation*}
\mathbf{x}^{\ast }(S_{a}^{\left[ h\right] })=\left\{ 
\begin{array}{ccc}
\left[ \underset{k}{\underbrace{M_{1},.....,M_{1}}},\theta ,\underset{h-k-1}{%
\underbrace{m_{1},.....,m_{1}},}\underset{n-h}{\underbrace{m_{2},.....,m_{2}}%
}\right]  & \text{ if } & a<a^{\ast } \\ 
&  &  \\ 
\left[ \underset{h}{\underbrace{M_{1},.....,M_{1}}},\underset{k-h}{%
\underbrace{M_{2},.....,M_{2}}},\theta ,\underset{n-k-1}{\underbrace{%
m_{2},.....m_{2}}}\right]  & \text{ if } & a\geq a^{\ast }%
\end{array}%
\right. 
\end{equation*}%
where $\mathbf{M}=M_{1}\mathbf{s^{h}}+M_{2}\mathbf{v^{h}},$ $\mathbf{m}=m_{1}%
\mathbf{s^{h}}+m_{2}\mathbf{v^{h}}$ and $\theta =a-\left\langle \mathbf{M},%
\mathbf{s}^{\mathbf{k}}\right\rangle -\left\langle \mathbf{m},\mathbf{v}^{%
\mathbf{k+1}}\right\rangle $.
\end{corollary}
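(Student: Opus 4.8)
The plan is to derive Corollary \ref{cor:dueintervalli} directly from the Theorem by specializing the vectors $\mathbf{m}$ and $\mathbf{M}$ to the step vectors $\mathbf{m}=m_{1}\mathbf{s^{h}}+m_{2}\mathbf{v^{h}}$ and $\mathbf{M}=M_{1}\mathbf{s^{h}}+M_{2}\mathbf{v^{h}}$, so that the box constraints in (\ref{Sh}) are exactly the constraints $M_{i}\ge x_{i}\ge m_{i}$ of (\ref{Sa}); thus $S_{a}^{[h]}=S_{a}$ for this particular choice, and $\mathbf{x}^{\ast}(S_{a}^{[h]})$ is given by (\ref{xsa}) once we identify the integer $k$ of (\ref{dis1}) and the scalar $\theta$. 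So the entire task reduces to computing, for these step vectors, the partial sums $\langle \mathbf{M},\mathbf{s^{k}}\rangle$ and $\langle \mathbf{m},\mathbf{v^{k}}\rangle$ and then solving the defining inequality (\ref{dis1}) for $k$ in closed form.

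First I would record the closed forms of the relevant inner products. Using property iii), $\mathbf{s^{h}}\circ\mathbf{s^{k}}=\mathbf{s}^{\min\{h,k\}}$ and $\mathbf{v^{h}}\circ\mathbf{s^{k}}=\mathbf{s^{k}}-\mathbf{s}^{\min\{h,k\}}$, so
\begin{equation*}
\langle \mathbf{M},\mathbf{s^{k}}\rangle = M_{1}\min\{h,k\}+M_{2}\bigl(k-\min\{h,k\}\bigr),
\end{equation*}
and similarly $\langle \mathbf{m},\mathbf{v^{k}}\rangle = \langle\mathbf{m},\mathbf{s^{n}}\rangle-\langle\mathbf{m},\mathbf{s^{k}}\rangle = \bigl(hm_{1}+(n-h)m_{2}\bigr)-\bigl(m_{1}\min\{h,k\}+m_{2}(k-\min\{h,k\})\bigr)$. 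The crux is that the behaviour of $\langle\mathbf{M},\mathbf{s^{k}}\rangle+\langle\mathbf{m},\mathbf{v^{k}}\rangle$ as a function of $k$ changes slope at $k=h$: for $k\le h$ each unit increase of $k$ adds $M_{1}-m_{1}$, while for $k\ge h$ it adds $M_{2}-m_{2}$. Evaluating at $k=h$ gives precisely $a^{\ast}=hM_{1}+(n-h)m_{2}$. Hence the condition $a<a^{\ast}$ forces the critical index $k$ of (\ref{dis1}) to satisfy $k<h$, and $a\ge a^{\ast}$ forces $k\ge h$; in each regime the map $k\mapsto\langle\mathbf{M},\mathbf{s^{k}}\rangle+\langle\mathbf{m},\mathbf{v^{k}}\rangle$ is an affine function of $k$ with known slope and known value at one point, so inverting the inequality (\ref{dis1}) and taking the integer part yields exactly the two displayed formulas for $k$. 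I would then substitute $k$ back into (\ref{xsa}): since $\mathbf{M\circ s^{k}}$ equals $M_{1}\mathbf{s^{k}}$ when $k\le h$ (resp. $M_{1}\mathbf{s^{h}}+M_{2}(\mathbf{s^{k}}-\mathbf{s^{h}})$ when $k\ge h$) by property iii), and $\mathbf{m\circ v^{k+1}}$ splits analogously, the vector (\ref{xsa}) takes the two block shapes claimed, with the single non-extreme coordinate $\theta$ sitting in position $k+1$.

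The only genuine obstacle is bookkeeping: one must check that the two candidate values of $k$ really land in the correct ranges ($0\le k<h$ in the first case, $h\le k\le n-1$ in the second) given the standing hypothesis $hm_{1}+(n-h)m_{2}\le a\le hM_{1}+(n-h)M_{2}$, and that the floor in the definition of $k$ does not push the index out of bounds at the endpoints $a=hm_{1}+(n-h)m_{2}$, $a=a^{\ast}$, $a=hM_{1}+(n-h)M_{2}$; these are short monotonicity checks using $m_{i}<M_{i}$. One also verifies that in the first regime $\mathbf{e}^{k+1}$ falls among the ``$m_{1}$'' block (so $h-k-1\ge 0$) and in the second regime among the ``$m_{2}$'' block (so $n-k-1\ge 0$), which is automatic from the range of $k$ just established. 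Once these boundary cases are dispatched, the corollary is an immediate translation of the Theorem, and no further argument about the majorization order is needed since the Theorem already supplies the full maximality claim for $S_{a}$.
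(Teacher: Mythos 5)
Your proposal is correct and follows essentially the same route as the paper: specialize the Theorem to the step vectors $\mathbf{m}=m_{1}\mathbf{s^{h}}+m_{2}\mathbf{v^{h}}$, $\mathbf{M}=M_{1}\mathbf{s^{h}}+M_{2}\mathbf{v^{h}}$, compute $\left\langle \mathbf{M},\mathbf{s^{k}}\right\rangle$ and $\left\langle \mathbf{m},\mathbf{v^{k}}\right\rangle$ in the two regimes $k<h$ and $k\geq h$, solve (\ref{dis1}) for $k$ (with the threshold $a^{\ast}$ deciding which regime is admissible), and substitute into (\ref{xsa}). Your explicit slope-change/monotonicity observation and the boundary bookkeeping are just a more detailed rendering of the paper's ``acceptable only if'' case analysis.
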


\begin{proof}
Easy computations give: 
\begin{equation*}
\left\langle \mathbf{M},\mathbf{s}^{\mathbf{k}}\right\rangle =\left\{ 
\begin{array}{ccc}
hM_{1}+M_{2}\left( k-h\right) & \text{ if } & k\geq h \\ 
kM_{1} & \text{ if } & k<h%
\end{array}%
\right.
\end{equation*}%
\begin{equation*}
\left\langle \mathbf{m},\mathbf{v}^{\mathbf{k}}\right\rangle =\left\{ 
\begin{array}{ccc}
\left( n-k\right) m_{2} & \text{ if } & k\geq h \\ 
\left( n-h\right) m_{2}+m_{1}\left( h-k\right) & \text{ if } & k<h%
\end{array}%
\right.
\end{equation*}%
and the values are linked for continuity when $k=h. $ We distinguish two
cases:

\begin{enumerate}
\item[i)] $k\geq h:$ from (\ref{dis1}) we have 
\begin{equation*}
k=\left\lfloor \dfrac{a-h(M_{1}-M_{2})-nm_{2}}{M_{2}-m_{2}}\right\rfloor 
\end{equation*}%
that is acceptable only if $a\geq hM_{1}+(n-h)m_{2}=a^{\ast }.$ Then, from (%
\ref{xsa}) 
\begin{equation*}
\mathbf{x}^{\ast }(S_{a}^{\left[ h\right] })=\left[ \underset{h}{\underbrace{%
M_{1},.....,M_{1}}},\underset{k-h}{\underbrace{M_{2},.....,M_{2}}},\theta ,%
\underset{n-k-1}{\underbrace{m_{2},.....m_{2}}}\right] .
\end{equation*}

\item[ii)] $k<h:$ from (\ref{dis1}) we get%
\begin{equation*}
k=\left\lfloor \dfrac{a-h(m_{1}-m_{2})-nm_{2}}{M_{1}-m_{1}}\right\rfloor 
\end{equation*}%
that is acceptable only if $a<hM_{1}+(n-h)m_{2}=a^{\ast }.$ Then, from (\ref%
{xsa})%
\begin{equation*}
\mathbf{x}^{\ast }(S_{a}^{\left[ h\right] })=\left[ \underset{k}{\underbrace{%
M_{1},.....,M_{1}}},\theta ,\underset{h-k-1}{\underbrace{m_{1},.....,m_{1}},}%
\underset{n-h}{\underbrace{m_{2},.....,m_{2}}}\right] .
\end{equation*}
\end{enumerate}
\end{proof}


\begin{remark}
\label{rem:a=a*} When $a=a^{\ast }$ it is worthwhile to note that $k=h$ and $%
\theta =m_{2}$ so that 
\begin{equation*}
\mathbf{x}^{\ast }(S_{a}^{\left[ h\right] })=\left[ \underset{h}{\underbrace{%
M_{1},.....,M_{1}}},\underset{n-h}{\underbrace{m_{2},.....,m_{2}}}\right] .
\end{equation*}
\end{remark}


\begin{remark}
\label{rem:m=M} The assumption $m_{i}<M_{i}$ in Corollary \ref%
{cor:dueintervalli} can be relaxed to $m_{i}\leq M_{i}$. Indeed if $%
m_{i}=M_{i},i=1,2$, the set $S_{a}^{\left[ h\right] }$ reduces to the
singleton $\{m_{1}\mathbf{s^{h}}+m_{2}\mathbf{v^{h}}\}$, while if $%
m_{1}=M_{1},m_{2}<M_{2}$ the first $h$ components of any $\mathbf{x}\in
S_{a}^{\left[ h\right] }$ are fixed and equal to $m_{1}$ and the maximal
element of $S_{a}^{\left[ h\right] }$ can be computed by the maximal element
of $S_{a-hm_{1}}\in \mathbb{R}^{n-h}$ (see Corollary \ref{cor:marshall}
below). The case $m_{2}=M_{2},m_{1}<M_{1}$ is similar.
\end{remark}

\noindent The next proposition is proved in \cite{Marshall} and it
immediately follows from Corollary 3 when $m_{1}=m_{2}=m$ and $M_{1}=M_{2}=M$%
.

\begin{corollary}
\label{cor:marshall} Let $0\leq m<M$ and $m\leq \dfrac{a}{n}\leq M.$ Given
the subset 
\begin{equation*}
S_{a}^{1}=\Sigma _{a}\cap \left\{ \mathbf{x\in }\text{ }%
\mathbb{R}
^{n}:M\geq x_{1}\geq x_{2}\geq ...\geq x_{n}\geq m\right\}
\end{equation*}%
we have 
\begin{equation*}
\mathbf{x}^{\ast }(S_{a}^{1})=M\mathbf{s}^{\mathbf{k}}+\theta \mathbf{e}^{%
\mathbf{k+1}}+m\mathbf{v}^{\mathbf{k+1}},
\end{equation*}%
where $k=\left\lfloor \dfrac{a-nm}{M-m}\right\rfloor $ and $\theta
=a-Mk-m\left( n-k-1\right) .$

\noindent In particular when $m=0$ we obtain 
\begin{equation*}
\mathbf{x}^{\ast }(S_{a}^{1})=M\mathbf{s}^{\mathbf{k}}+\theta \mathbf{e}^{%
\mathbf{k+1}},
\end{equation*}%
where $k=\left\lfloor \dfrac{a}{M}\right\rfloor $ and $\theta =a-Mk$.
\end{corollary}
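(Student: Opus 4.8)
The plan is to obtain Corollary~\ref{cor:marshall} as a direct specialization of Corollary~\ref{cor:dueintervalli}, simply by setting $m_{1}=m_{2}=m$ and $M_{1}=M_{2}=M$. First I would observe that under this identification the set $S_{a}^{[h]}$ defined in~(\ref{Sh}) collapses, for any admissible choice of $h$, to the set $S_{a}^{1}$, since the two chains of inequalities $M\geq x_{1}\geq\dots\geq x_{h}\geq m$ and $M\geq x_{h+1}\geq\dots\geq x_{n}\geq m$ together with the ordering inherited from $\Sigma_{a}$ are equivalent to the single chain $M\geq x_{1}\geq\dots\geq x_{n}\geq m$. Likewise the vectors $\mathbf{M}=M_{1}\mathbf{s^{h}}+M_{2}\mathbf{v^{h}}$ and $\mathbf{m}=m_{1}\mathbf{s^{h}}+m_{2}\mathbf{v^{h}}$ become $\mathbf{M}=M\mathbf{s^{n}}$ and $\mathbf{m}=m\mathbf{s^{n}}$, so the hypothesis $hm_{1}+(n-h)m_{2}\leq a\leq hM_{1}+(n-h)M_{2}$ reduces exactly to $nm\leq a\leq nM$, i.e. $m\leq a/n\leq M$, which is the stated assumption.

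Next I would unwind the two cases of Corollary~\ref{cor:dueintervalli} and check they coalesce. Since $M_{1}-M_{2}=m_{1}-m_{2}=0$ and $M_{1}-m_{1}=M_{2}-m_{2}=M-m$, both branches of the formula for $k$ become $k=\left\lfloor \dfrac{a-nm}{M-m}\right\rfloor$, independently of the sign of $a-a^{\ast}$; similarly $\langle\mathbf{M},\mathbf{s^{k}}\rangle=Mk$ and $\langle\mathbf{m},\mathbf{v^{k+1}}\rangle=m(n-k-1)$, so $\theta=a-Mk-m(n-k-1)$, matching the claimed value. Finally, plugging these into either displayed form of $\mathbf{x}^{\ast}(S_{a}^{[h]})$, or equivalently applying~(\ref{xsa}) with $\mathbf{M}=M\mathbf{s^{n}}$, $\mathbf{m}=m\mathbf{s^{n}}$, gives
\begin{equation*}
\mathbf{x}^{\ast}(S_{a}^{1})=M\mathbf{s}^{\mathbf{k}}+\theta\mathbf{e}^{\mathbf{k+1}}+m\mathbf{v}^{\mathbf{k+1}},
\end{equation*}
which is the asserted maximal element.

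For the special case $m=0$, I would simply note that then $\mathbf{m}=\mathbf{0}$, so the term $m\mathbf{v}^{\mathbf{k+1}}$ vanishes, $k=\left\lfloor a/M\right\rfloor$, and $\theta=a-Mk$, giving $\mathbf{x}^{\ast}(S_{a}^{1})=M\mathbf{s}^{\mathbf{k}}+\theta\mathbf{e}^{\mathbf{k+1}}$.

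There is essentially no hard step here: the only mild subtlety is to confirm that the case split in Corollary~\ref{cor:dueintervalli} is genuinely vacuous under the specialization (the threshold $a^{\ast}=hM+(n-h)\cdot 0=hM$ still depends on $h$, but the resulting $k$ and $\theta$ do not), and to make sure the boundary instances covered by Remark~\ref{rem:m=M}, namely $m=M$, are excluded by the standing hypothesis $m<M$ so that Corollary~\ref{cor:dueintervalli} applies verbatim. Everything else is routine substitution.
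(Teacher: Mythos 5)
Your proposal is correct and follows exactly the paper's route: the paper states that this corollary (also proved in Marshall--Olkin) follows immediately from Corollary \ref{cor:dueintervalli} by taking $m_{1}=m_{2}=m$ and $M_{1}=M_{2}=M$, which is precisely your specialization. Your verification that both branches of the formula for $k$ coalesce and that $\theta=a-Mk-m(n-k-1)$ is just the routine detail the paper leaves implicit.
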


\noindent It is worthwhile to notice that $S_{a}$, is a subset of $S_{a}^{1}$
where $m=m_{n}$ and $M=M_{1}$. Thus the following inequality holds: 
\begin{equation}
x^{\ast }(S_{a})\text{ }\trianglelefteq \text{ }x^{\ast }(S_{a}^{1}).
\label{confronto}
\end{equation}

\noindent Finally we recall the following result (see \cite{Bianchi}).

\begin{corollary}
Let $1\leq h\leq n$ and $0<\alpha \leq a/h.$ Given the subset 
\begin{equation*}
S_{a}^{2}=\Sigma _{a}\cap \left\{ \mathbf{x\in }\text{ }%
\mathbb{R}
^{n}:x_{i}\geq \alpha ,\text{ }i=1,...h\right\} ,
\end{equation*}%
we have $\mathbf{x}^{\ast }(S_{a}^{2})=\left( a-h\alpha \right) \mathbf{e}%
^{1}+\alpha \mathbf{s^{h}}.$
\end{corollary}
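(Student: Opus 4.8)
The plan is to derive this corollary as a direct specialization of Theorem~2 (the general formula for $\mathbf{x}^{\ast}(S_a)$), just as the authors did with Corollary~\ref{cor:dueintervalli}. Here the set $S_a^2$ is exactly of the form~(\ref{Sa}) with the lower-bound vector $\mathbf{m} = \alpha\,\mathbf{s^h}$ (so $m_i = \alpha$ for $i \le h$ and $m_i = 0$ for $i > h$) and the upper-bound vector $\mathbf{M}$ effectively $+\infty$ in every coordinate; since the sum constraint $\langle \mathbf{x}, \mathbf{s^n}\rangle = a$ already caps each coordinate, we may harmlessly take $M_i = a$ for all $i$, which is consistent with the nonincreasing-order and $m_i \le M_i$ hypotheses because $\alpha \le a/h \le a$. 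So the first step is simply to check that $S_a^2$ fits the template~(\ref{Sa}) and that the feasibility condition $\langle \mathbf{m}, \mathbf{s^n}\rangle = h\alpha \le a \le na = \langle \mathbf{M}, \mathbf{s^n}\rangle$ holds, which is immediate from $0 < \alpha \le a/h$.

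Next I would compute the integer $k$ from~(\ref{dis1}). With $\mathbf{M} = a\,\mathbf{s^n}$ we have $\langle \mathbf{M}, \mathbf{s^k}\rangle = ka$, and $\langle \mathbf{m}, \mathbf{v^k}\rangle = \alpha\langle \mathbf{s^h}, \mathbf{v^k}\rangle = \alpha(h - \min\{h,k\})$ by property~ii). For $k = 0$ the left inequality in~(\ref{dis1}) reads $\langle \mathbf{m}, \mathbf{v^0}\rangle = h\alpha \le a$, which is true by hypothesis; and the right inequality reads $a < \langle \mathbf{M}, \mathbf{s^1}\rangle + \langle \mathbf{m}, \mathbf{v^1}\rangle = a + \alpha(h-1) $... here one must be slightly careful: if $h \ge 1$ this is $a < a + \alpha(h-1)$, which holds strictly when $h \ge 2$ but is an equality when $h = 1$, forcing $k$ up by one in that edge case. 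In all cases, though, the smallest admissible $k$ is $0$ whenever $h \ge 2$, and the resulting $\theta = a - \langle \mathbf{M}, \mathbf{s^0}\rangle - \langle \mathbf{m}, \mathbf{v^1}\rangle = a - \alpha(h-1)$ when $h\ge 1$. Then~(\ref{xsa}) gives $\mathbf{x}^{\ast}(S_a^2) = \mathbf{M}\circ \mathbf{s^0} + \theta\,\mathbf{e^1} + \mathbf{m}\circ \mathbf{v^0} = (a - (h-1)\alpha)\mathbf{e^1} + \alpha\,\mathbf{s^h}$, since $\mathbf{m}\circ \mathbf{v^0} = \mathbf{m} = \alpha\,\mathbf{s^h}$ and the $\mathbf{e^1}$ term adds $\alpha$ back into the first coordinate that $\mathbf{v^0}$ did not suppress. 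Rewriting $\alpha\,\mathbf{s^h} + (a-(h-1)\alpha)\mathbf{e^1}$ by pulling one $\alpha$ out of $\mathbf{s^h}$, one gets $\alpha(\mathbf{s^h} - \mathbf{e^1}) + (a - h\alpha + \alpha)\mathbf{e^1} + \ldots$; a clean way is to note $(a-(h-1)\alpha)\mathbf{e^1} + \alpha\mathbf{s^h} = (a - h\alpha)\mathbf{e^1} + \alpha\mathbf{e^1} + \alpha\mathbf{s^h}$ — but $\mathbf{e^1}$ is already inside $\mathbf{s^h}$, so it is cleaner to write the first component as $a - (h-1)\alpha = (a - h\alpha) + \alpha$ and observe the claimed answer $(a-h\alpha)\mathbf{e^1} + \alpha\mathbf{s^h}$ has first component $(a-h\alpha)+\alpha = a-(h-1)\alpha$, matching, with all other components $\le h$ equal to $\alpha$ and components $> h$ equal to $0$. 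So the two expressions agree.

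Alternatively, and perhaps more cleanly for the write-up, I would bypass Theorem~2 and give a two-line direct argument: first verify membership, $\mathbf{x}^{\ast}(S_a^2) \in S_a^2$ (components nonincreasing since $a - (h-1)\alpha \ge a - h\alpha + \alpha \ge \alpha$ using $\alpha \le a/h$, the first $h$ are $\ge \alpha$, the sum is $a$); then for any $\mathbf{x} \in S_a^2$ check $\langle \mathbf{x}, \mathbf{s^k}\rangle \le \langle \mathbf{x}^{\ast}(S_a^2), \mathbf{s^k}\rangle$ for each $k$. For $k \ge h$ this follows from $\langle \mathbf{x},\mathbf{s^k}\rangle \le \langle\mathbf{x},\mathbf{s^n}\rangle - \langle \mathbf{x}, \mathbf{v^k}\rangle \le a - 0$ and the partial sums of $\mathbf{x}^{\ast}$ reach $a$ already at $k = h$; for $k < h$, one uses that $\mathbf{x}^{\ast}$ front-loads everything beyond the forced mass $\alpha$ into the first coordinate, so $\langle \mathbf{x}^{\ast}, \mathbf{s^k}\rangle = (a - (h-1)\alpha) + (k-1)\alpha = a - (h-k)\alpha$, while $\langle \mathbf{x}, \mathbf{s^k}\rangle \le a - (h-k)\alpha$ because the remaining $h - k$ of the $\alpha$-constrained coordinates each contribute at least $\alpha$. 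The main obstacle is purely bookkeeping: handling the $h=1$ degenerate case in the $k$-formula and being careful that the partial-sum bounds for $k<h$ correctly account for the $h-k$ reserved lower bounds; there is no conceptual difficulty once the set is matched to the template of~(\ref{Sa}).
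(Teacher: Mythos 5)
Your proposal is correct and is essentially the paper's own route: the authors obtain this corollary by setting $m_{1}=\alpha$, $m_{2}=0$, $M_{1}=M_{2}=a$ in Corollary \ref{cor:dueintervalli} (itself a specialization of the theorem giving (\ref{xsa})), which is exactly your choice $\mathbf{m}=\alpha\,\mathbf{s^{h}}$, $\mathbf{M}=a\,\mathbf{s^{n}}$; like you they find $k=0$ and $\theta=a-(h-1)\alpha$ for $h>1$, and they treat $h=1$ separately (via Remark \ref{rem:a=a*}, which gives $k=1$, $\theta=0$ and $\mathbf{x}^{\ast}(S_{a}^{2})=a\mathbf{e}^{1}$ --- the completion of the edge case you flagged but did not carry out). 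Two small repairs to your first route: in (\ref{xsa}) with $k=0$ the last term is $\mathbf{m}\circ\mathbf{v}^{\mathbf{k+1}}=\mathbf{m}\circ\mathbf{v}^{\mathbf{1}}=\alpha(\mathbf{s^{h}}-\mathbf{e}^{1})$, not $\mathbf{m}\circ\mathbf{v}^{\mathbf{0}}=\mathbf{m}$; with the correct term the algebra collapses immediately to $\theta\mathbf{e}^{1}+\alpha\mathbf{s^{h}}-\alpha\mathbf{e}^{1}=(a-h\alpha)\mathbf{e}^{1}+\alpha\mathbf{s^{h}}$ (exactly the paper's last line), whereas your intermediate expression $(a-(h-1)\alpha)\mathbf{e}^{1}+\alpha\mathbf{s^{h}}$ has total sum $a+\alpha$ and first entry $a-(h-2)\alpha$, so the subsequent ``matching'' discussion compares the correct componentwise vector $[\theta,\alpha,\dots,\alpha,0,\dots,0]$ with a wrong symbolic rewriting of it. Your alternative direct argument (membership, then $\langle\mathbf{x},\mathbf{s^{k}}\rangle\leq a-(h-k)\alpha$ for $k<h$ and $\langle\mathbf{x},\mathbf{s^{k}}\rangle\leq a$ for $k\geq h$) is complete, handles $h=1$ uniformly, and is a perfectly sound elementary substitute for invoking the general theorem.
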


\begin{proof}
The set $S_{a}^{2}$ can be obtained by (\ref{Sa}) for $m_{1}=\alpha $, $%
m_{2}=0$, $M_{1}=M_{2}=a.$ Since $a^{\ast }=ha\geq a,$ two cases can be
distinguished:

\begin{enumerate}
\item[i)] $h=1:$ we have $a^{\ast }=a$ and from Remark \ref{rem:a=a*} it
immediately follows that $k=1$ and $\theta =0$ so that 
\begin{equation*}
\mathbf{x}^{\ast }(S_{a}^{2})=a\mathbf{e}^{1}.
\end{equation*}

\item[ii)] $h>1:$ we have $a^{\ast }>a$ and Corollary \ref{cor:dueintervalli}
implies that $k=\left\lfloor \dfrac{a-h\alpha }{a-\alpha }\right\rfloor =0.$
Thus 
\begin{equation*}
\mathbf{x}^{\ast }(S_{a}^{2})=\left[ \theta ,\underset{h-1}{\underbrace{%
\alpha ,...,\alpha }},\underset{n-h}{\underbrace{0,...,0}}\right]
\end{equation*}%
where $\theta =a-(h-1)\alpha $, which leads to 
\begin{equation*}
\mathbf{x}^{\ast }(S_{a}^{2})=\theta \mathbf{e^{1}}+\alpha \mathbf{s^{h}}%
-\alpha \mathbf{e^{1}}=\left( a-h\alpha \right) \mathbf{e}^{1}+\alpha 
\mathbf{s^{h}}.
\end{equation*}
\end{enumerate}
\end{proof}

\section{The minimal element of $S_{a}$}

In this section we study the structure of the minimal element, with respect
to the majorization order, of the set $S_a$.

\begin{theorem}
\label{th:minimo} \label{minimo} Let $k \ge 0$ and $d \ge 0$ be the smallest
integers such that

\begin{itemize}
\item[1)] $k + d <n $

\item[2)] $m_{k+1} \le \rho \le M_{n-d}$ where $\rho= \dfrac { a - \langle 
\mathbf{m},\mathbf{s^k} \rangle - \langle \mathbf{M}, \mathbf{v^{n-d}}
\rangle} {n-k-d}$.
\end{itemize}

Then 
\begin{equation*}
\mathbf{x_{\ast }}(S_{a})=\mathbf{m\circ s^{k}}+\rho (\mathbf{s^{n-d}}-%
\mathbf{s^{k}})+\mathbf{M\circ v^{n-d}}.
\end{equation*}
\end{theorem}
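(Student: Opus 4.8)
The plan is to mirror the structure of the proof of the maximal-element theorem, replacing "put mass at the top" with "put mass at the bottom," and using the dual vectors $\mathbf{v}^{\mathbf{j}}$ in place of the $\mathbf{s}^{\mathbf{j}}$. First I would check that $\mathbf{x_{\ast}}(S_a)\in S_a$: the constraint $\langle\mathbf{x_{\ast}}(S_a),\mathbf{s^n}\rangle=a$ follows from a direct computation using $\mathbf{s^k}+(\mathbf{s^{n-d}}-\mathbf{s^k})+\mathbf{v^{n-d}}=\mathbf{s^n}$ together with the definition of $\rho$; the component bounds $m_i\le \mathbf{x_{\ast}}_i(S_a)\le M_i$ hold trivially for $i\le k$ (value $m_i$) and for $i\ge n-d+1$ (value $M_i$), and for the middle block $k+1\le i\le n-d$ they are exactly condition 2), namely $m_{k+1}\le\rho\le M_{n-d}$ — here one also uses monotonicity of $\mathbf{m}$ and $\mathbf{M}$ to get $m_i\le m_{k+1}\le\rho$ and $\rho\le M_{n-d}\le M_i$ for those $i$. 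I should also note the ordering $m_i\ge\rho$ for $i\le k$ is not needed, but $\rho$ lying between consecutive bounds is what makes the vector nonincreasing; this is where the minimality of $k$ and $d$ enters.

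Next, for the majorization inequality $\mathbf{x_{\ast}}(S_a)\trianglelefteq\mathbf{x}$ for every $\mathbf{x}\in S_a$, the natural route is to bound partial sums from below. Using properties i), iii), iv) I would compute $\langle\mathbf{x_{\ast}}(S_a),\mathbf{s^j}\rangle$ in closed form in the three ranges $1\le j\le k$, $k+1\le j\le n-d$, $n-d+1\le j\le n-1$: in the first range it equals $\langle\mathbf{m},\mathbf{s^j}\rangle$, in the last it equals $a-\langle\mathbf{M},\mathbf{v^j}\rangle$, and in the middle it interpolates linearly with slope $\rho$. Then for an arbitrary $\mathbf{x}\in S_a$: when $1\le j\le k$ we have $\langle\mathbf{x},\mathbf{s^j}\rangle\ge\langle\mathbf{m},\mathbf{s^j}\rangle=\langle\mathbf{x_{\ast}}(S_a),\mathbf{s^j}\rangle$ directly from $x_i\ge m_i$; when $n-d+1\le j\le n-1$ we write $\langle\mathbf{x},\mathbf{s^j}\rangle=a-\langle\mathbf{x},\mathbf{v^j}\rangle\ge a-\langle\mathbf{M},\mathbf{v^j}\rangle=\langle\mathbf{x_{\ast}}(S_a),\mathbf{s^j}\rangle$ from $x_i\le M_i$.

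The middle range $k+1\le j\le n-d$ is the crux. There the closed form is $\langle\mathbf{x_{\ast}}(S_a),\mathbf{s^j}\rangle=\langle\mathbf{m},\mathbf{s^k}\rangle+(j-k)\rho$, and I need $\langle\mathbf{x},\mathbf{s^j}\rangle\ge\langle\mathbf{m},\mathbf{s^k}\rangle+(j-k)\rho$ for all feasible $\mathbf{x}$. The cleanest argument: the function $j\mapsto\langle\mathbf{x_{\ast}}(S_a),\mathbf{s^j}\rangle-\langle\mathbf{x_{\ast}}(S_a),\mathbf{s^k}\rangle$ is linear with slope $\rho$, and it is the \emph{smallest} value consistent with $\sum_{i>k}x_i\ge\sum_{i=k+1}^{n-d}\rho+\langle\mathbf{M},\mathbf{v^{n-d}}\rangle=a-\langle\mathbf{m},\mathbf{s^k}\rangle$ plus the upper bounds $x_i\le M_i$ on the tail $i>n-d$ and the nonincreasing constraint; more concretely, fixing the total tail mass $\langle\mathbf{x},\mathbf{s^n}\rangle-\langle\mathbf{x},\mathbf{s^k}\rangle$, the partial sum $\langle\mathbf{x},\mathbf{s^j}\rangle$ for $k+1\le j\le n-d$ is minimized by pushing as much as possible toward the end, i.e. saturating $x_i=M_i$ for $i>n-d$ and equidistributing the remainder at level $\rho$ across $k+1,\dots,n-d$; that extremal configuration is precisely $\mathbf{x_{\ast}}(S_a)$, and one also must check $\langle\mathbf{x},\mathbf{s^k}\rangle$ cannot be too small — but that is the $1\le j\le k$ case already handled, so combining $\langle\mathbf{x},\mathbf{s^k}\rangle\ge\langle\mathbf{m},\mathbf{s^k}\rangle$ with a convexity/rearrangement estimate on the tail closes it. I expect the main obstacle to be making this tail-equidistribution argument rigorous without circularity: one should argue directly via the inequality $\langle\mathbf{x},\mathbf{s^j}\rangle=a-\langle\mathbf{x},\mathbf{v^j}\rangle$ and then lower-bound $\langle\mathbf{x},\mathbf{v^j}\rangle$... wait, that gives an upper bound. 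Instead the right move is: for $k+1\le j\le n-d$, split $\langle\mathbf{x},\mathbf{s^j}\rangle=\langle\mathbf{x},\mathbf{s^k}\rangle+\sum_{i=k+1}^{j}x_i$ and use that among indices $k+1,\dots,n$ the values $x_{k+1}\ge\cdots\ge x_n$ with $x_i\le M_i$ for $i>n-d$ force $\sum_{i=k+1}^j x_i\ge (j-k)\,\bar\rho$ where $\bar\rho$ is the average of the smallest $n-d-k$ of these constrained values; one then checks $\bar\rho\ge\rho$ using $\langle\mathbf{x},\mathbf{s^k}\rangle\ge\langle\mathbf{m},\mathbf{s^k}\rangle$ and the total-sum constraint. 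Verifying this chain of inequalities carefully, and confirming that the minimality of $k,d$ guarantees $\rho$ genuinely satisfies $m_{k+1}\le\rho\le M_{n-d}$ (so the candidate is feasible and no smaller choice works), is the technical heart; everything else is the routine computation of $\langle\mathbf{x_{\ast}}(S_a),\mathbf{s^j}\rangle$ via properties i)--iv).
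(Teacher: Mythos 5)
Your skeleton coincides with the paper's: check that $\mathbf{x_{\ast}}(S_a)\in S_a$ (using condition 2) together with the monotonicity of $\mathbf{m}$ and $\mathbf{M}$ for the middle block), then prove $\langle \mathbf{x_{\ast}}(S_a),\mathbf{s^h}\rangle\le\langle\mathbf{x},\mathbf{s^h}\rangle$ in the three ranges, and your handling of $1\le h\le k$ and $n-d+1\le h\le n-1$ is exactly the paper's cases i) and iii). The gap is in the middle range, which you rightly call the crux. The reduction you propose there, namely $\sum_{i=k+1}^{j}x_i\ge(j-k)\bar\rho$ followed by ``$\bar\rho\ge\rho$ using $\langle\mathbf{x},\mathbf{s^k}\rangle\ge\langle\mathbf{m},\mathbf{s^k}\rangle$ and the total-sum constraint,'' fails at the second step: the inequality $\langle\mathbf{x},\mathbf{s^k}\rangle\ge\langle\mathbf{m},\mathbf{s^k}\rangle$ removes mass from the tail, so it pushes $\bar\rho$ \emph{below} $\rho$, not above it. Concretely, take $n=3$, $\mathbf{m}=(4,0,0)$, $\mathbf{M}=(10,10,10)$, $a=6$; then $k=1$, $d=0$, $\rho=1$ and $\mathbf{x_{\ast}}(S_a)=(4,1,1)$, while $\mathbf{x}=(6,0,0)\in S_a$ gives $\bar\rho=0<\rho$ under any reading of ``average of the smallest $n-d-k$ constrained values.'' The required inequality $\langle\mathbf{x},\mathbf{s^2}\rangle\ge 5$ still holds, but only because the surplus sits in the head sum $\langle\mathbf{x},\mathbf{s^k}\rangle$; decoupling head and tail as you do throws away precisely this compensation.

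The repair is to keep $\langle\mathbf{x},\mathbf{s^k}\rangle$ inside the estimate with its correct weight. For $k+1\le j\le n-d$, monotonicity of the entries gives $\sum_{i=k+1}^{j}x_i\ge\frac{j-k}{n-d-k}\sum_{i=k+1}^{n-d}x_i$, and $\sum_{i=k+1}^{n-d}x_i\ge a-\langle\mathbf{x},\mathbf{s^k}\rangle-\langle\mathbf{M},\mathbf{v^{n-d}}\rangle$, hence $\langle\mathbf{x},\mathbf{s^j}\rangle\ge\bigl(1-\frac{j-k}{n-d-k}\bigr)\langle\mathbf{x},\mathbf{s^k}\rangle+\frac{j-k}{n-d-k}\bigl(a-\langle\mathbf{M},\mathbf{v^{n-d}}\rangle\bigr)$. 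Since $j\le n-d$, the coefficient of $\langle\mathbf{x},\mathbf{s^k}\rangle$ is nonnegative, and only at this point may one substitute $\langle\mathbf{x},\mathbf{s^k}\rangle\ge\langle\mathbf{m},\mathbf{s^k}\rangle$, which yields exactly $\langle\mathbf{m},\mathbf{s^k}\rangle+(j-k)\rho=\langle\mathbf{x_{\ast}}(S_a),\mathbf{s^j}\rangle$. This weighted combination is precisely what the paper's case ii) encodes: it argues by contradiction at $h=k+1$, bounds $x_j\le x_{k+1}$ for $j\ge k+2$, uses $\langle\mathbf{x},\mathbf{s^{n-d}}\rangle\ge a-\langle\mathbf{M},\mathbf{v^{n-d}}\rangle$, and lands on the impossible inequality $0<(1-n+d+k)(\langle\mathbf{x},\mathbf{s^k}\rangle-\langle\mathbf{m},\mathbf{s^k}\rangle)$, extending to larger $h$ by induction. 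With the middle-range step corrected along these lines your argument closes and is essentially the paper's proof run forward rather than by contradiction.
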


\begin{proof}
The minimal element of the set $\Sigma _{a}$ is $\mathbf{x_{\circ }}(\Sigma
_{a})=\frac{a}{n}\mathbf{s^{n}}$. If $m_{1}\leq \mathbf{x_{\ast }}(\Sigma
_{a})\leq M_{n}$, then $\mathbf{x_{\ast }}(\Sigma _{a})\in S_{a}$ and $%
\mathbf{x_{\ast }}(S_{a})=\mathbf{x_{\ast }}(\Sigma _{a})$ (notice that in
this case $k=d=0$).

\noindent If $\mathbf{x_{\ast }}(\Sigma _{a})\notin S_{a}$, let $k$ and $d$
the smallest integers satisfying conditions 1) and 2) above. It is easy to
verify that $\mathbf{x_{\ast }}(S_{a})\in S_{a}.$ In order to prove that it
is the minimal element, we must show that for all $\mathbf{x}\in S_{a}$ 
\begin{equation}
\langle \mathbf{x_{\ast }}(S_{a}),\mathbf{s^{h}}\rangle \leq \langle \mathbf{%
x},\mathbf{s^{h}}\rangle \,\,\,,h=1,\cdots (n-1).  \label{elemento minimo}
\end{equation}

\noindent We distinguish three cases:

\begin{itemize}
\item[i)] $\mathbf{1\leq h\leq k}$. Since $\langle \mathbf{x_{\ast }}(S_{a}),%
\mathbf{s^{h}}\rangle =\langle \mathbf{m},\mathbf{s^{h}}\rangle $, the
inequality (\ref{elemento minimo}) is straightforward.

\item[ii)] $\mathbf{k+1 \le h \le n-d}$. We prove the inequality (\ref%
{elemento minimo}) for $h=k+1$. By induction, similar arguments can be
applied to prove the inequality for $h=k+2, \cdots (n-d)$.

By contradiction, let us assume that there exists $\mathbf{x}\in S_{a}$ such
that 
\begin{equation*}
\langle \mathbf{x_{\ast }(S_{a})},\mathbf{s^{k+1}}\rangle =\langle \mathbf{m}%
,\mathbf{s^{k}}\rangle +\rho >\langle \mathbf{x},\mathbf{s^{k}}\rangle
+x_{k+1}.
\end{equation*}%
Then $x_{j}\leq x_{k+1}<\langle \mathbf{m},\mathbf{s^{k}}\rangle +\rho
-\langle \mathbf{x},\mathbf{s^{k}}\rangle $, for $j=k+2,\cdots n$ and thus 
\begin{align*}
\langle \mathbf{x},\mathbf{s^{n-d}}\rangle & =\langle \mathbf{x},\mathbf{%
s^{k}}\rangle +\langle \mathbf{x},\mathbf{s^{n-d}}-\mathbf{s^{k}}\rangle < \\
& <\langle \mathbf{x},\mathbf{s^{k}}\rangle +(n-d-k)(\langle \mathbf{m},%
\mathbf{s^{k}}\rangle +\rho -\langle \mathbf{x},\mathbf{s^{k}}\rangle ).
\end{align*}%
Taking into account that 
\begin{equation*}
\langle \mathbf{x},\mathbf{s^{n-d}}\rangle =a-\langle \mathbf{x},\mathbf{%
v^{n-d}}\rangle \geq a-\langle \mathbf{M},\mathbf{v^{n-d}}\rangle ,
\end{equation*}%
we get 
\begin{equation*}
a-\langle \mathbf{M},\mathbf{v^{n-d}}\rangle <(1-n+d+k)\langle \mathbf{x},%
\mathbf{s^{k}}\rangle +(n-d-k)(\langle \mathbf{m},\mathbf{s^{k}}\rangle
+\rho ).
\end{equation*}%
Using the expression of $\rho $, we obtain 
\begin{equation*}
0<(1-n+d+k)(\langle \mathbf{x},\mathbf{s^{k}}\rangle -\langle \mathbf{m},%
\mathbf{s^{k}}\rangle ).
\end{equation*}%
Since $(1-n+d+k)\leq 0$ and $\langle \mathbf{x},\mathbf{s^{k}}\rangle \geq
\langle \mathbf{m},\mathbf{s^{k}}\rangle $, the inequality above is false,
and we have got the contradiction.

\item[iii)] $\mathbf{n-d+1\leq h<n}$. For any $\mathbf{x}\in S_{a}$ we have 
\begin{align*}
\langle \mathbf{x_{\ast }}(S_{a}),\mathbf{s^{h}}\rangle & =\langle \mathbf{%
x_{\ast }}(S_{a}),\mathbf{s^{n-d}}\rangle +\langle \mathbf{x_{\ast }}(S_{a}),%
\mathbf{s^{h}}-\mathbf{s^{n-d}}\rangle = \\
& =\langle \mathbf{m},\mathbf{s^{k}}\rangle +(n-d-k)\rho +\langle \mathbf{M},%
\mathbf{s^{h}}-\mathbf{s^{n-d}}\rangle = \\
& =a-\langle \mathbf{M},\mathbf{v^{n-d}}\rangle +\langle \mathbf{M},\mathbf{%
s^{h}}-\mathbf{s^{n-d}}\rangle = \\
& =a-\langle \mathbf{M},\mathbf{s^{n}}-\mathbf{s^{h}}\rangle = \\
& =\langle \mathbf{x},\mathbf{s^{h}}\rangle +\langle \mathbf{x},\mathbf{s^{n}%
}-\mathbf{s^{h}}\rangle -\langle \mathbf{M},\mathbf{s^{n}}-\mathbf{s^{h}}%
\rangle \\
& \leq \langle \mathbf{x},\mathbf{s^{h}}\rangle .
\end{align*}
\end{itemize}
\end{proof}

\noindent Now we analyze the minimal element of particular subsets of $S_{a}$%
. We start considering the intervals $[m_{i},M_{i}],i=1,\cdots ,n$
disjointed. Notice that this additional assumption does not modify the
choice of the maximal element, while it simplifies the choice of the minimal
element.

\begin{corollary}
Let us consider the set $S_{a}$ and assume 
\begin{equation}
M_{i+1}<m_{i} \text{ for } i=1,...(n-1).  \label{eq:disgiunto}
\end{equation}%
Let $k\geq 0$ be the smallest integer such that 
\begin{equation}
\left\langle \mathbf{m},\mathbf{s}^{\mathbf{k+1}}\right\rangle +\left\langle 
\mathbf{M},\mathbf{v}^{\mathbf{k+1}}\right\rangle \leq a<\left\langle 
\mathbf{m},\mathbf{s}^{\mathbf{k}}\right\rangle +\left\langle \mathbf{M},%
\mathbf{v}^{\mathbf{k}}\right\rangle  \label{bound}
\end{equation}%
and $\rho =a-\left\langle \mathbf{m},\mathbf{s}^{\mathbf{k}}\right\rangle
-\left\langle \mathbf{M},\mathbf{v}^{\mathbf{k+1}}\right\rangle .$ Then 
\begin{equation*}
\mathbf{x}_{\ast }(S_{a})=\mathbf{m\circ s}^{\mathbf{k}}+\rho \mathbf{e}^{%
\mathbf{k+1}}+\mathbf{M\circ v}^{\mathbf{k+1}}
\end{equation*}
\end{corollary}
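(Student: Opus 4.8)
The plan is to derive this corollary as a special case of Theorem~\ref{th:minimo} by showing that the disjointedness hypothesis (\ref{eq:disgiunto}) forces the parameter $d$ of the general theorem to be zero, so that the two-parameter search collapses to a single-index search identical in form to the one appearing in the corollary. First I would recall from Theorem~\ref{th:minimo} that $\mathbf{x}_{\ast}(S_a) = \mathbf{m}\circ\mathbf{s^k} + \rho(\mathbf{s^{n-d}}-\mathbf{s^k}) + \mathbf{M}\circ\mathbf{v^{n-d}}$, where $k,d$ are the smallest integers with $k+d<n$ and $m_{k+1}\le\rho\le M_{n-d}$, with $\rho = (a - \langle\mathbf{m},\mathbf{s^k}\rangle - \langle\mathbf{M},\mathbf{v^{n-d}}\rangle)/(n-k-d)$. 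The claim to establish is that under (\ref{eq:disgiunto}) one necessarily has $d=0$ at the optimal pair, and then $\rho = (a-\langle\mathbf{m},\mathbf{s^k}\rangle-\langle\mathbf{M},\mathbf{v^n}\rangle)/(n-k) = (a-\langle\mathbf{m},\mathbf{s^k}\rangle)/(n-k)$ since $\mathbf{v^n}=\mathbf{0}$; but the corollary's answer has a single free scalar $\rho$ in position $k+1$ and the fixed values $M_{k+2},\dots,M_n$ in the tail, which does \emph{not} match $d=0$. So the cleaner route is actually the opposite: I would argue that (\ref{eq:disgiunto}) forces the ``middle block'' $\mathbf{s^{n-d}}-\mathbf{s^k}$ of constant value $\rho$ to have length exactly one, i.e. $n-d = k+1$, so that $\mathbf{s^{n-d}}-\mathbf{s^k} = \mathbf{e^{k+1}}$ and $\mathbf{v^{n-d}} = \mathbf{v^{k+1}}$, recovering precisely the stated formula.

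The key steps, in order, are as follows. Step one: show the displayed condition (\ref{bound}) for the corollary's $k$ is equivalent, after rearrangement, to $m_{k+1} \le \rho < m_k$ (for $k\ge 1$) and $\rho < \langle\mathbf{m},\mathbf{s^0}\rangle+\langle\mathbf{M},\mathbf{v^0}\rangle$-type boundary behaviour for $k=0$ — this comes from subtracting $\langle\mathbf{m},\mathbf{s^k}\rangle + \langle\mathbf{M},\mathbf{v^{k+1}}\rangle$ from all three parts of (\ref{bound}) and using $\mathbf{s^{k+1}}-\mathbf{s^k}=\mathbf{e^{k+1}}$, $\mathbf{v^k}-\mathbf{v^{k+1}}=\mathbf{e^{k+1}}$, giving $m_{k+1}\le\rho$ on the left and $\rho<m_k$ (equivalently $\rho < M_k$ is weaker, but we want the sharper one) — actually the right inequality rearranges to $\rho < m_k$ is not quite it either; carefully, the right half of (\ref{bound}) says $a < \langle\mathbf{m},\mathbf{s^k}\rangle + \langle\mathbf{M},\mathbf{v^k}\rangle$, and subtracting $\langle\mathbf{m},\mathbf{s^k}\rangle+\langle\mathbf{M},\mathbf{v^{k+1}}\rangle$ gives $\rho < \langle\mathbf{M},\mathbf{v^k}-\mathbf{v^{k+1}}\rangle = M_k$. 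Step two: verify directly that the vector $\mathbf{w} := \mathbf{m}\circ\mathbf{s^k} + \rho\mathbf{e^{k+1}} + \mathbf{M}\circ\mathbf{v^{k+1}}$ lies in $S_a$ — that its coordinate sum is $a$ is immediate from the definition of $\rho$, the box constraints $M_i\ge w_i\ge m_i$ hold trivially off position $k+1$ and follow at $k+1$ from $m_{k+1}\le\rho$ together with $\rho < M_k$ and the hypothesis $M_{k+1} < m_k$, so in particular one must also check $\rho \le M_{k+1}$; this is where I would use minimality of $k$ in (\ref{bound}) or the disjointedness directly. Step three: confirm the nonincreasing ordering $w_1\ge\dots\ge w_n$, which is exactly where (\ref{eq:disgiunto}) is indispensable: we need $m_k \ge \rho \ge m_{k+1} \ge M_{k+1} \ge \dots$, and $m_k > M_{k+1}$ is hypothesis (\ref{eq:disgiunto}), while $m_{k+1} \ge M_{k+1}$ follows from $m_{k+1} > M_{k+2}$ and $M_{k+1}\ge M_{k+2}$... — more cleanly, $m_{k+1} > M_{k+1+1}$ is not what we want; we want $m_{k+1} \ge M_{k+1}$, which is part of the standing assumption $m_i \le M_i$ read the wrong way, so in fact one should note $\rho \le M_{k+1}$ and $\rho \ge m_{k+1}$ sandwich $\rho$ between two values in the same coordinate's box, and the ordering against neighbors uses $m_k > M_{k+1} \ge \rho$ above and $\rho \ge m_{k+1} > M_{k+2} \ge m_{k+2}\cdots$ below.

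Step four, the substantive one: show $\mathbf{w}$ is the minimal element. Here I would invoke Theorem~\ref{th:minimo} rather than re-proving the majorization inequalities. The argument is: the $(k,d)$ produced by Theorem~\ref{th:minimo} must satisfy $n-d = k+1$. Indeed, suppose $n-d \ge k+2$; then the middle block has length $\ge 2$ and all its entries equal the common value $\rho_{\mathrm{gen}}$, with $\rho_{\mathrm{gen}} \le M_{n-d} \le M_{k+2} < m_{k+1} \le \rho_{\mathrm{gen}}$ — a contradiction, where the chain uses the box constraint at the right end of the block, monotonicity of $\mathbf{M}$, disjointedness (\ref{eq:disgiunto}) shifted, and the box constraint at the left end of the block. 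Hence $n-d=k+1$, the general formula becomes $\mathbf{m}\circ\mathbf{s^k}+\rho_{\mathrm{gen}}\mathbf{e^{k+1}}+\mathbf{M}\circ\mathbf{v^{k+1}}$ with $\rho_{\mathrm{gen}} = (a-\langle\mathbf{m},\mathbf{s^k}\rangle-\langle\mathbf{M},\mathbf{v^{k+1}}\rangle)/1 = \rho$, and one last check that the ``smallest $k$'' selection rule of Theorem~\ref{th:minimo} (which minimizes $k$ first) yields the same $k$ as (\ref{bound}) closes the proof. The main obstacle I anticipate is precisely this matching of selection rules — reconciling the ``smallest $k$ then smallest $d$'' quantifier ordering of Theorem~\ref{th:minimo} with the single inequality chain (\ref{bound}), and making sure the edge cases $k=0$ and $k+1=n$ (empty tail, so $\mathbf{v^{k+1}}=\mathbf{0}$) are handled; everything else is routine box-constraint and telescoping-sum bookkeeping.
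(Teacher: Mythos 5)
Your core argument (step four) is exactly the paper's proof: condition 2) of Theorem~\ref{th:minimo} together with hypothesis (\ref{eq:disgiunto}) yields the chain $M_{n-d}\le M_{k+2}<m_{k+1}\le \rho \le M_{n-d}$ whenever $n-d\ge k+2$, forcing $n-d=k+1$ so that the general formula collapses to $\mathbf{m}\circ \mathbf{s^{k}}+\rho \mathbf{e^{k+1}}+\mathbf{M}\circ \mathbf{v^{k+1}}$ with the stated $\rho$. The exploratory detours in your earlier steps (including the index slip where the right half of (\ref{bound}) rearranges to $\rho<M_{k+1}$, not $\rho<M_{k}$, and the initial false start about $d=0$) do not affect this, so the proposal is correct and essentially the same as the paper's argument.
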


\begin{proof}
By condition 2) in Theorem \ref{th:minimo} and assumption (\ref{eq:disgiunto}%
), we get 
\begin{equation*}
M_{k+2}<m_{k+1}\leq \rho \leq M_{n-d}.
\end{equation*}%
Thus $k>n-d-2$. Since $k$ is an integer such that $k<n-d$, we have
necessarily $k=n-d-1$ and the thesis follows.
\end{proof}


\bigskip

\noindent Another case of practical interest regards the set studied in
Corollary \ref{cor:dueintervalli}.

\begin{corollary}
\label{dueintervalliminimo} Given $1\leq h\leq n$, let us consider the set 
\begin{equation*}
S_{a}^{\left[ h\right] }=%
\begin{array}{c}
\Sigma _{a}\cap \left\{ \mathbf{x}\in \mathbb{R}^{n}:M_{1}\geq x_{1}\geq
...\geq x_{h}\geq m_{1},\right.  \\ 
\left. M_{2}\geq x_{h+1}\geq ...\geq x_{n}\geq m_{2}\right\} 
\end{array}%
,
\end{equation*}%
where $0\leq m_{2}\leq m_{1}$, $0\leq M_{2}\leq M_{1}$, $m_{i}<M_{i},i=1,2$
and 
\begin{equation*}
hm_{1}+(n-h)m_{2}\leq a\leq hM_{1}+(n-h)M_{2}.
\end{equation*}%
If $m_{1}\leq \dfrac{a}{n}\leq M_{2}$ we have $x_{\ast }(S_{a}^{\left[ h%
\right] })=\frac{a}{n}\mathbf{s^{n}}$. Otherwise, let $\widetilde{a}%
=hm_{1}+(n-h)M_{2}$. If $\left\{ 
\begin{array}{c}
a<m_{1}n \\ 
a\leq \widetilde{a}%
\end{array}%
\right. $, given $\rho =\dfrac{a-hm_{1}}{n-h}$, we have 
\begin{equation*}
\mathbf{x_{\ast }}(S_{a}^{\left[ h\right] })=m_{1}\mathbf{s^{h}}+\rho 
\mathbf{v^{h}}=\left[ \underset{h}{\underbrace{m_{1},.....,m_{1}}},\underset{%
n-h}{\underbrace{\rho ,.....,\rho }}\right] .
\end{equation*}%
\noindent If $\left\{ 
\begin{array}{c}
a>M_{2}n \\ 
a\geq \widetilde{a}%
\end{array}%
\right. $ , given $\rho =\dfrac{a-M_{2}(n-h)}{h},$ we have 
\begin{equation*}
\mathbf{x_{\ast }}(S_{a}^{\left[ h\right] })=\rho \mathbf{s^{h}}+M_{2}%
\mathbf{v^{h}}=\left[ \underset{h}{\underbrace{\rho ,...,\rho }},\underset{%
n-h}{\underbrace{M_{2},...,M_{2}}}\right] .
\end{equation*}
\end{corollary}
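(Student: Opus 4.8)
The plan is to derive everything from the general minimal-element result, Theorem \ref{th:minimo}, by specializing the data $\mathbf{m}=m_{1}\mathbf{s^{h}}+m_{2}\mathbf{v^{h}}$ and $\mathbf{M}=M_{1}\mathbf{s^{h}}+M_{2}\mathbf{v^{h}}$ and identifying the parameters $k$ and $d$ in each regime. First I would dispose of the easy case: if $m_{1}\le a/n\le M_{2}$, then $m_{1}\mathbf{s^{n}}\cdot(a/n)$— more precisely the vector $\frac{a}{n}\mathbf{s^{n}}$— satisfies $m_{i}\le x_{i}\le M_{i}$ for every $i$ (using $m_{2}\le m_{1}\le a/n\le M_{2}\le M_{1}$), so it lies in $S_{a}^{[h]}$; since it is already the minimal element of the larger set $\Sigma_{a}$, it is a fortiori minimal in $S_{a}^{[h]}$. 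This matches the stated $k=d=0$ situation of Theorem \ref{th:minimo}.

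For the first nontrivial regime, assume $a<m_{1}n$ and $a\le\widetilde{a}=hm_{1}+(n-h)M_{2}$. The claim is that $k=0$ and $d=0$ are \emph{not} admissible but the correct choice is $k=0$, $d=0$ replaced by forcing the first $h$ coordinates down to $m_{1}$; concretely I expect $k=h$ and $d=0$. With $k=h$, $d=0$ one computes $\langle\mathbf{m},\mathbf{s^{h}}\rangle=hm_{1}$, $\langle\mathbf{M},\mathbf{v^{n}}\rangle=0$, so $\rho=\dfrac{a-hm_{1}}{n-h}$, exactly the quantity in the statement, and the candidate vector becomes $m_{1}\mathbf{s^{h}}+\rho\mathbf{v^{h}}$. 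I must check the two defining conditions of Theorem \ref{th:minimo}: condition 1) $k+d=h<n$ (true when $h<n$; the case $h=n$ degenerates and $S_{a}^{[h]}=\Sigma_a\cap\{M_1\ge x_1\ge\cdots\ge x_n\ge m_1\}$ is handled by Corollary \ref{cor:marshall}), and condition 2) $m_{k+1}\le\rho\le M_{n-d}$, i.e. $m_{2}\le\rho\le M_{2}$. The inequality $\rho\le M_{2}$ is equivalent to $a\le hm_{1}+(n-h)M_{2}=\widetilde a$, which is our hypothesis; the inequality $m_{2}\le\rho$ follows from $a\ge hm_{1}+(n-h)m_{2}$, part of the standing assumption on $a$. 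I also need \emph{minimality of the index}: that $k=h$ is the smallest integer making things work, i.e. no $k<h$ is admissible; this is where the hypothesis $a<m_{1}n$ enters, since for $k<h$ one would need $\rho\ge m_{k+1}=m_1$, forcing $a\ge$ a quantity that exceeds $m_1 n$ under $a<\widetilde a$. The symmetric regime $a>M_{2}n$, $a\ge\widetilde a$ is handled identically with the roles of the two blocks swapped: here $d=n-h$, $k=0$, giving $\rho=\dfrac{a-(n-h)M_{2}}{h}$ and the vector $\rho\mathbf{s^{h}}+M_{2}\mathbf{v^{h}}$; condition 2) becomes $m_{1}\le\rho\le M_{1}$, with $\rho\ge m_{1}$ equivalent to $a\ge\widetilde a$ and $\rho\le M_{1}$ following from the standing upper bound on $a$.

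The main obstacle, I expect, is not the verification that the stated vectors belong to $S_{a}^{[h]}$ and satisfy the partial-sum inequalities — that is a direct substitution into the machinery of Theorem \ref{th:minimo} — but rather checking that the \emph{case hypotheses are exhaustive and the index choices are the minimal ones}. One has to confirm that the four listed cases ($m_1\le a/n\le M_2$; $a<m_1 n$ and $a\le\widetilde a$; $a>M_2 n$ and $a\ge\widetilde a$; and implicitly the boundary/overlap behavior) together cover all $a$ in $[hm_1+(n-h)m_2,\ hM_1+(n-h)M_2]$, and that on overlaps the formulas agree. In particular when $a=\widetilde a$ both of the nontrivial formulas should collapse to $m_1\mathbf{s^h}+M_2\mathbf{v^h}$, which is a useful consistency check; and one should verify that the conditions $a<m_1n$, $a\le\widetilde a$ are genuinely the conditions under which the general-theorem indices are $(k,d)=(h,0)$ rather than something intermediate. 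I would organize the write-up as: (1) the trivial case, (2) reduction to Theorem \ref{th:minimo}, (3) computation of $\langle\mathbf{m},\mathbf{s^k}\rangle$ and $\langle\mathbf{M},\mathbf{v^{n-d}}\rangle$ for the relevant $(k,d)$, (4) verification of conditions 1) and 2) and their minimality, and (5) the consistency remark at $a=\widetilde a$.
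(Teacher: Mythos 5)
Your proposal is correct and takes essentially the same route as the paper: both specialize Theorem \ref{th:minimo} to the two-block data $\mathbf{m}=m_{1}\mathbf{s^{h}}+m_{2}\mathbf{v^{h}}$, $\mathbf{M}=M_{1}\mathbf{s^{h}}+M_{2}\mathbf{v^{h}}$, identify the admissible index pairs $(k,d)=(0,0)$, $(h,0)$, $(0,n-h)$ in the three regimes, and translate condition 2) together with the nonincreasing-order requirement into the stated inequalities on $a$. The paper merely organizes the same verification as a case analysis on $k=0$ versus $d=0$ with subcases $n-d\gtrless h$ and $k\gtrless h$, rather than positing the index pair outright and checking its minimality as you do.
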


\begin{proof}
Let us investigate when the best choice $k=d=0$ is admissible. Under this
assumption, from condition $2)$ in Theorem \ref{minimo} we have 
\begin{equation}
m_{1}\leq \rho =\dfrac{a}{n}\leq M_{n}=M_{2}.  \label{disequa1}
\end{equation}%
If the condition above holds, the minimal element is $\mathbf{x_{\ast }}%
(S_{a}^{\left[ h\right] })=\dfrac{a}{n}\mathbf{s}^{\mathbf{n}}.$

\noindent Otherwise if condition (\ref{disequa1}) does not hold, we begin
with the case $k=0$. We have 
\begin{equation*}
\rho =\dfrac{a-<\mathbf{M},\mathbf{v^{n-d}}>}{n-d}
\end{equation*}%
and 
\begin{equation*}
x_{\ast }(S_{a}^{\left[ h\right] })=\rho \mathbf{s^{n-d}}+\mathbf{M\circ
v^{n-d}}.
\end{equation*}%
From condition $2)$ in Theorem \ref{minimo}, we have $m_{1}\leq \rho \leq
M_{n-d}$ and, taking into account that the elements in $x_{\ast }(S_{a}^{%
\left[ h\right] })$ are in nonincreasing order, $\rho \geq M_{n-d+1}$. We
distinguish three cases:

\begin{enumerate}
\item[i)] if $n-d>h$ then necessarily $\rho=M_2$, but this contradicts (\ref%
{disequa1}).

\item[ii)] if $n-d<h$ then $\rho =M_{1}$ and this is admissible only if $%
a=M_{1}h+M_{2}(n-h),$ so that 
\begin{equation*}
\mathbf{x_{\ast }}(S_{a}^{\left[ h\right] })=\left[ \underset{h}{\underbrace{%
M_{1},.....,M_{1}}},\underset{n-h}{\underbrace{M_{2},.....,M_{2}}}\right] .
\end{equation*}

\item[iii)] if $n-d=h,$ then $\rho =\dfrac{a-M_{2}d}{n-d}$ and 
\begin{equation*}
\mathbf{x_{\ast }}(S_{a}^{\left[ h\right] })=\left[ \underset{h}{\underbrace{%
\rho ,.....,\rho }},\underset{n-h}{\underbrace{M_{2},.....,M_{2}}}\right] .
\end{equation*}%
This result is admissible only if $\rho >M_{2}$ and $m_{1}\leq \rho \leq
M_{1}$, i.e. if $\left\{ 
\begin{array}{c}
a>M_{2}n \\ 
a\geq \widetilde{a}%
\end{array}%
\right. .$
\end{enumerate}

\noindent A symmetric case occurs when $d=0,$ so we have 
\begin{equation*}
\rho =\dfrac{a-<\mathbf{m},\mathbf{s^{k}}>}{n-k}
\end{equation*}%
and 
\begin{equation*}
x_{\ast }(S_{a}^{\left[ h\right] })=\mathbf{m\circ s^{k}}+\rho \mathbf{v^{k}}%
.
\end{equation*}%
From condition $2)$ in Theorem \ref{minimo}, we have that $m_{k+1}\leq \rho
\leq M_{2}$ and, taking into account that the elements in $x_{\ast }(S_{a}^{%
\left[ h\right] })$ are in nonincreasing order, $\rho \leq m_{k}$. We
distinguish three cases:

\begin{enumerate}
\item[i)] if $k<h$ then necessarily $\rho=m_1$, but this contradicts (\ref%
{disequa1}).

\item[ii)] if $k>h,$ then $\rho =m_{2}$ and this is possible only if $%
a=hm_{1}+m_{2}(n-h),$ so that 
\begin{equation*}
\mathbf{x_{\ast }}(S_{a}^{\left[ h\right] })=\left[ \underset{h}{\underbrace{%
m_{1},.....,m_{1}}},\underset{n-h}{\underbrace{m_{2},.....,m_{2}}}\right] .
\end{equation*}

\item[iii)] if $k=h$, then $\rho =\dfrac{a-hm_{1}}{n-h}$ and 
\begin{equation*}
\mathbf{x_{\ast }}(S_{a}^{\left[ h\right] })=\left[ \underset{h}{\underbrace{%
m_{1},.....,m_{1}}},\underset{n-h}{\underbrace{\rho ,.....,\rho }}\right] .
\end{equation*}%
This result is admissible only if $m_{2}\leq \rho \leq M_{2}$ and $\rho
<m_{1}$, i.e. only if $\left\{ 
\begin{array}{c}
a<m_{1}n \\ 
a\leq \widetilde{a}%
\end{array}%
.\right. $
\end{enumerate}
\end{proof}

\noindent Corollary \ref{dueintervalliminimo} distinguishes the minimal
element of $S_{a}^{\left[ h\right] }$ whether 
\begin{equation*}
\left\{ 
\begin{array}{c}
a<m_{1}n \\ 
a\leq \widetilde{a}%
\end{array}%
\right. \text{ or }\left\{ 
\begin{array}{c}
a>M_{2}n \\ 
a\geq \widetilde{a}%
\end{array}%
.\right. 
\end{equation*}%
We note that if $m_{1}\leq M_{2}$ the first inequality in the systems above
is always stronger than the second one, while if $M_{2}<m_{1}$ the second
one is stronger than the first. Thus we can summarize the minimal element of 
$S_{a}^{\left[ h\right] }$ in a more accessible way according to the
following scheme:

\begin{enumerate}
\item[i)] If $m_{1}\leq M_{2}$ then 
\begin{equation}
x_{\ast }(S_{a}^{\left[ h\right] })=\left\{ 
\begin{array}{ccc}
\dfrac{a}{n}\mathbf{s^{n}} & \text{ if } & m_{1}\leq \dfrac{a}{n}\leq M_{2}
\\ 
m_{1}\mathbf{s^{h}}+\dfrac{a-hm_{1}}{n-h}\mathbf{v^{h}} & \text{ if } & 
\dfrac{a}{n}<m_{1} \\ 
\dfrac{a-M_{2}(n-h)}{h}\mathbf{s^{h}}+M_{2}\mathbf{v^{h}} & \text{ if } & 
\dfrac{a}{n}>M_{2} \\ 
&  & 
\end{array}%
\right.   \label{m_1 le M_2}
\end{equation}%
and the vectors are linked for continuity.

\item[ii)] If $M_{2}<m_{1}$ then 
\begin{equation}
x_{\ast }(S_{a}^{\left[ h\right] })=\left\{ 
\begin{array}{ccc}
m_{1}\mathbf{s^{h}}+\dfrac{a-hm_{1}}{n-h}\mathbf{v^{h}} & \text{ if } & a<%
\widetilde{a} \\ 
\dfrac{a-M_{2}(n-h)}{h}\mathbf{s^{h}}+M_{2}\mathbf{v^{h}} & \text{ if } & 
a\geq \widetilde{a}. \\ 
&  & 
\end{array}%
\right.   \label{M_2<m_1}
\end{equation}
\end{enumerate}


\begin{remark}
\label{remark:a=atilde}

When $a=\widetilde{a}$ it is worthwhile to note that 
\begin{equation*}
\mathbf{x}_{\ast }(S_{a}^{\left[ h\right] })=m_{1}\mathbf{s^{h}}+M_{2}%
\mathbf{v^{h}}=\left[ \underset{h}{\underbrace{m_{1},.....,m_{1}}},\underset{%
n-h}{\underbrace{M_{2},.....,M_{2}}}\right] .
\end{equation*}
\end{remark}


\begin{remark}
\label{re:componenti intere} We note that the minimal element of the set $%
S_{a}^{\left[ h\right] }$ does not necessarily have integer components,
while this is not the case for the maximal element. For some applications,
it is meaningful to find the minimal vector in $S_{a}^{\left[ h\right] }$
with integer components. We illustrate below the procedure to follow. Let us
consider, for instance, the vector $x_{\ast }(S_{a}^{\left[ h\right] })=%
\dfrac{a}{n}\mathbf{s^{n}}$ which corresponds to the case $m_{1}\leq \dfrac{a%
}{n}\leq M_{2}$ (see (\ref{m_1 le M_2})). If $\dfrac{a}{n}$ is not an
integer, let us find the index $k$, $1\leq k\leq n$, such that 
\begin{equation*}
(\lfloor \dfrac{a}{n}\rfloor +1)k+\lfloor \dfrac{a}{n}\rfloor (n-k)=a
\end{equation*}%
i.e. $k=a-\lfloor \dfrac{a}{n}\rfloor n$. The vector 
\begin{equation*}
\mathbf{x_{\ast }^{1}}=(\lfloor \dfrac{a}{n}\rfloor +1)\mathbf{s^{k}}%
+\lfloor \dfrac{a}{n}\rfloor \mathbf{v^{k}}
\end{equation*}%
is the minimal element of $S_{a}^{\left[ h\right] }$ with integer components.

\noindent With slight modification, the same procedure can be applied also
in the other cases illustrated in (\ref{m_1 le M_2}) or (\ref{M_2<m_1}),
where only some of the components of $x_{\ast }(S_{a}^{\left[ h\right] })$
can be non integer.
\end{remark}

\noindent To complete our analysis, we show how from Corollary \ref%
{dueintervalliminimo}, particular cases can be deduced. More precisely,
assuming in Corollary \ref{dueintervalliminimo} $m_{1}=m_{2}$, $M_{1}=M_{2}$
or $h=n$ we obtain the results proved in \cite{Marshall}.

\begin{corollary}
\label{cor:marshall2} Let $0\leq m<M$ and $m\leq \dfrac{a}{n}\leq M.$ Given
the subset 
\begin{equation*}
S_{a}^{1}=\Sigma _{a}\cap \left\{ \mathbf{x\in }\text{ }%
\mathbb{R}
^{n}:M\geq x_{1}\geq ...\geq x_{n-1}\geq x_{n}\geq m\right\}
\end{equation*}%
we have $x_{\ast }(S_{a}^{1})=\dfrac{a}{n}\mathbf{s^{n}}$.
\end{corollary}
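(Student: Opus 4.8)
The plan is to deduce this directly from Corollary \ref{dueintervalliminimo} by specializing the parameters. First I would observe that the set $S_{a}^{1}$ is an instance of $S_{a}^{\left[ h\right] }$: taking $h=n$ (so that $\mathbf{v^{h}}=\mathbf{0}$ and the constraint block $M_{2}\geq x_{h+1}\geq\cdots\geq x_{n}\geq m_{2}$ is vacuous) together with $m_{1}=m$ and $M_{1}=M$, the defining conditions of $S_{a}^{\left[ h\right] }$ collapse precisely to $M\geq x_{1}\geq\cdots\geq x_{n}\geq m$. Alternatively one can take $h$ arbitrary with $m_{1}=m_{2}=m$ and $M_{1}=M_{2}=M$; either specialization works, and I would pick whichever makes the bookkeeping cleanest, probably $m_{1}=m_{2}=m$, $M_{1}=M_{2}=M$ with $h$ left general, since then the hypothesis $m_{1}\le M_{2}$ of case i) in the scheme \eqref{m_1 le M_2} reads $m\le M$, which is exactly what is assumed.

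Next I would check that the hypothesis $m\le a/n\le M$ places us in the first branch of \eqref{m_1 le M_2}, namely the case $m_{1}\leq a/n\leq M_{2}$, which immediately yields $x_{\ast}(S_{a}^{1})=\frac{a}{n}\mathbf{s^{n}}$. One should also verify the standing feasibility assumption of Corollary \ref{dueintervalliminimo}, i.e. $hm_{1}+(n-h)m_{2}\leq a\leq hM_{1}+(n-h)M_{2}$, which under the specialization becomes $nm\leq a\leq nM$ — again a consequence of $m\le a/n\le M$. Since Corollary \ref{dueintervalliminimo} formally assumes the strict inequality $m_{i}<M_{i}$, if $m=M$ the set $S_{a}^{1}$ is the singleton $\{m\,\mathbf{s^{n}}\}$ and the claim is trivial (consistent with $\frac{a}{n}\mathbf{s^{n}}=m\,\mathbf{s^{n}}$ since then $a=nm$); so one may assume $m<M$ without loss of generality, exactly as in Remark \ref{rem:m=M}.

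I do not anticipate a genuine obstacle here: the whole content is a verification that the hypotheses of the already-proved Corollary \ref{dueintervalliminimo} are met and that we land in the stated branch. The only point requiring a moment's care is the degenerate identification of the constraint sets — making sure that the ``$h=n$'' or ``$m_1=m_2,\ M_1=M_2$'' reduction really does turn $S_{a}^{\left[ h\right]}$ into $S_{a}^{1}$ and does not inadvertently lose or add feasible points — but this is a routine unwinding of the definitions \eqref{Sh} and the definition of $S_{a}^{1}$. Thus the proof is essentially one line: apply Corollary \ref{dueintervalliminimo} with $m_{1}=m_{2}=m$, $M_{1}=M_{2}=M$, note $m\le a/n\le M$ selects the first case of \eqref{m_1 le M_2}, and conclude $x_{\ast}(S_{a}^{1})=\frac{a}{n}\mathbf{s^{n}}$.
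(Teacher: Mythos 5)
Your proposal is correct and matches the paper's intent exactly: the paper states (without further detail) that Corollary \ref{cor:marshall2} follows from Corollary \ref{dueintervalliminimo} by taking $m_{1}=m_{2}$, $M_{1}=M_{2}$ (or $h=n$), and your specialization $m_{1}=m_{2}=m$, $M_{1}=M_{2}=M$ with the hypothesis $m\leq a/n\leq M$ selecting the first branch of (\ref{m_1 le M_2}) is precisely that argument, with the feasibility checks spelled out. Nothing further is needed.
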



\noindent As we did with the maximal element, it is clear that the vector
provided by Corollary \ref{dueintervalliminimo} majorizes the vector in
Corollary \ref{cor:marshall2}, i.e. the following inequality holds: 
\begin{equation}
x_{\ast }(S_{a}^{1})\text{ }\trianglelefteq \text{ }x_{\ast }(S_{a}).
\label{confronto1}
\end{equation}

\noindent Assuming $m_{1}=\alpha $, $m_{2}=0$, $M_{1}=M_{2}=a$ or $%
m_{1}=m_{2}=0$ and $M_{2}=\alpha $, $M_{1}=a$ we easily obtain the following
two corollaries ( see \cite{Bianchi}).


\begin{corollary}
Let $1\leq h\leq n$ and $0<\alpha \leq a/h.$ Given the subset 
\begin{equation*}
S_{a}^{2}=\Sigma _{a}\cap \left\{ \mathbf{x\in }\text{ }%
\mathbb{R}
^{n}:x_{i}\geq \alpha ,\text{ }i=1,...h\right\} ,
\end{equation*}%
we have 
\begin{equation*}
x_{\ast }(S_{a}^{2})=\left\{ 
\begin{array}{cc}
\dfrac{a}{n}\mathbf{s^{n}} & \text{ if }\alpha \leq \dfrac{a}{n} \\ 
\alpha \mathbf{s^{h}}+\rho \mathbf{v^{h}}\text{ with }\rho =\dfrac{a-\alpha h%
}{n-h} & \text{ if }\alpha >\dfrac{a}{n}%
\end{array}%
\right.
\end{equation*}
\end{corollary}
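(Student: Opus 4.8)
The plan is to obtain this corollary as a specialization of Corollary \ref{dueintervalliminimo}, exactly as the surrounding text suggests. The set $S_{a}^{2}$ is of the form $S_{a}^{\left[ h\right] }$ with the identifications $m_{1}=\alpha$, $m_{2}=0$, $M_{1}=M_{2}=a$: indeed the constraint $x_{i}\geq \alpha$ for $i=1,\dots,h$ together with $x_{i}\geq 0$ for $i=h+1,\dots,n$ and the trivial upper bound $x_{i}\leq a$ (which is automatic on $\Sigma_{a}$ since all components are nonnegative and sum to $a$) reproduces $S_{a}^{2}$. First I would record that with these values $M_{2}=a\geq\alpha=m_{1}$, so we are in case i) of the summarizing scheme, i.e.\ formula (\ref{m_1 le M_2}) applies.

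Next I would plug the values into (\ref{m_1 le M_2}). The middle branch there reads $m_{1}\leq a/n\leq M_{2}$, which becomes $\alpha\leq a/n\leq a$; the right inequality $a/n\leq a$ holds trivially (since $n\geq 1$ and $a>0$), so this branch is governed solely by $\alpha\leq a/n$, giving $x_{\ast}(S_{a}^{2})=\frac{a}{n}\mathbf{s^{n}}$. The second branch $a/n<m_{1}$ becomes $\alpha>a/n$, and it yields $m_{1}\mathbf{s^{h}}+\frac{a-hm_{1}}{n-h}\mathbf{v^{h}}=\alpha\mathbf{s^{h}}+\rho\mathbf{v^{h}}$ with $\rho=\frac{a-\alpha h}{n-h}$, matching the claimed formula. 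The third branch $a/n>M_{2}=a$ is impossible, so it never occurs. This gives exactly the two-case statement of the corollary.

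The only points that need a word of care — and which I would mention explicitly — are: (a) the hypothesis of Corollary \ref{dueintervalliminimo} requires $m_{i}<M_{i}$, i.e.\ $\alpha<a$ and $0<a$; the former is implied by $0<\alpha\leq a/h$ when $h\geq 2$, and when $h=1$ it says $\alpha\leq a$, where equality $\alpha=a$ would make the set a singleton and the formula still holds trivially (one may also invoke Remark \ref{rem:m=M} to relax the strict inequality); (b) the feasibility condition $hm_{1}+(n-h)m_{2}\leq a\leq hM_{1}+(n-h)M_{2}$ becomes $h\alpha\leq a\leq na$, both of which hold under the hypothesis $0<\alpha\leq a/h$. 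I do not anticipate any real obstacle here: the proof is a direct substitution into an already-established corollary, and the main (very minor) task is just checking these degenerate boundary cases so the citation of Corollary \ref{dueintervalliminimo} is legitimate.
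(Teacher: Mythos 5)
Your proposal is correct and follows exactly the route the paper intends: the paper obtains this corollary by the same substitution $m_{1}=\alpha$, $m_{2}=0$, $M_{1}=M_{2}=a$ into Corollary \ref{dueintervalliminimo} (via the scheme (\ref{m_1 le M_2})), stating it "easily follows" without further detail. Your additional checks of the feasibility condition and the degenerate case $\alpha=a$, $h=1$ (via Remark \ref{rem:m=M}) are sound and only make the citation more careful.
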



\begin{corollary}
Let $1\leq h\leq (n-1)$ and $0<\alpha <a.$ Given the subset 
\begin{equation*}
S_{a}^{3}=\Sigma _{a}\cap \left\{ \mathbf{x\in }\text{ }%
\mathbb{R}
^{n}:x_{i}\leq \alpha ,\text{ }i=h+1,...n\right\} ,
\end{equation*}%
we have 
\begin{equation*}
x_{\ast }(S_{a}^{3})=\left\{ 
\begin{array}{cc}
\dfrac{a}{n}\mathbf{s^{n}} & \text{ if }\alpha \geq \dfrac{a}{n} \\ 
\rho \mathbf{s^{h}}+\alpha \mathbf{v^{h}}\text{ with }\rho =\dfrac{%
a-(n-h)\alpha }{h} & \text{ if }\alpha <\dfrac{a}{n}%
\end{array}%
\right.
\end{equation*}
\end{corollary}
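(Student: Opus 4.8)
The plan is to derive this as a special case of Corollary \ref{dueintervalliminimo}, exactly as the surrounding text suggests (it says ``Assuming $m_{1}=m_{2}=0$ and $M_{2}=\alpha$, $M_{1}=a$ we easily obtain the following two corollaries''). First I would check that the set $S_{a}^{3}$ indeed has the form of $S_{a}^{[h]}$ with the substitution $m_{1}=m_{2}=0$, $M_{1}=a$, $M_{2}=\alpha$: the constraint $x_{i}\le\alpha$ for $i=h+1,\dots,n$ becomes $M_{2}\ge x_{h+1}\ge\dots\ge x_{n}\ge m_{2}=0$, while for $i=1,\dots,h$ the only active bound is $a=M_{1}\ge x_{1}\ge\dots\ge x_{h}\ge 0=m_{1}$, which is automatic inside $\Sigma_{a}$ (no component can exceed $a$ since all are nonnegative with sum $a$). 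I should also confirm the admissibility hypotheses of Corollary \ref{dueintervalliminimo}: $0\le m_{2}\le m_{1}$ and $0\le M_{2}\le M_{1}$ hold since $0<\alpha<a$, and $hm_{1}+(n-h)m_{2}=0\le a\le ha+(n-h)\alpha=hM_{1}+(n-h)M_{2}$ holds trivially. The one caveat is the strict inequality $m_{i}<M_{i}$: here $m_{2}=0<\alpha=M_{2}$ is fine, but $m_{1}=0<a=M_{1}$ also needs $a>0$, which follows from $\alpha<a$ and $\alpha>0$.

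Next I would determine which branch of Corollary \ref{dueintervalliminimo} applies. Since $M_{2}=\alpha$ and $m_{1}=0$, we are in the regime $m_{1}\le M_{2}$ (as $0\le\alpha$), so the scheme in equation (\ref{m_1 le M_2}) is the relevant one. Substituting $m_{1}=0$ there: the first branch ``$m_{1}\le a/n\le M_{2}$'' becomes ``$0\le a/n\le\alpha$'', i.e. $\alpha\ge a/n$, and gives $x_{\ast}=\frac{a}{n}\mathbf{s^{n}}$. The second branch ``$a/n<m_{1}$'' becomes ``$a/n<0$'', which is impossible since $a>0$, so it never occurs. The third branch ``$a/n>M_{2}$'' becomes ``$a/n>\alpha$'', and gives $x_{\ast}=\frac{a-M_{2}(n-h)}{h}\mathbf{s^{h}}+M_{2}\mathbf{v^{h}}=\rho\mathbf{s^{h}}+\alpha\mathbf{v^{h}}$ with $\rho=\frac{a-(n-h)\alpha}{h}$. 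This matches the claimed formula exactly.

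The only genuine obstacle is a boundary bookkeeping issue: in Corollary \ref{dueintervalliminimo} the hypothesis $1\le h\le n$ appears, but here $h\le n-1$ is assumed, so $n-h\ge 1$ and the formula $\rho=\frac{a-(n-h)\alpha}{h}$ has a well-defined nonzero numerator structure; moreover one should note the division by $n-h$ that appears in the ``$a/n<m_{1}$'' branch of (\ref{m_1 le M_2}) never gets exercised, so no degeneracy arises there. I would also briefly remark that the two branches agree at $\alpha=a/n$ (both give $\frac{a}{n}\mathbf{s^{n}}$, since then $\rho=\frac{a-(n-h)(a/n)}{h}=\frac{a}{n}$), confirming the continuity already asserted in the general scheme. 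With these checks in place the statement follows immediately, and the proof is essentially a one-line invocation of Corollary \ref{dueintervalliminimo} plus the simplification $m_{1}=m_{2}=0$, $M_{1}=a$, $M_{2}=\alpha$.
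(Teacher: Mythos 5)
Your proposal is correct and follows exactly the route the paper intends: the paper proves this corollary precisely by specializing Corollary \ref{dueintervalliminimo} (via the scheme (\ref{m_1 le M_2})) with $m_{1}=m_{2}=0$, $M_{1}=a$, $M_{2}=\alpha$, which is what you do, including the observation that the bound $x_{i}\leq a$ for $i\leq h$ is automatic in $\Sigma_{a}$. Your extra checks of the admissibility hypotheses and of the vacuous branch $a/n<m_{1}$ are sound and only make the ``easily obtain'' claim explicit.
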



\section{New bounds for the second Zagreb index}

\noindent Let $G=(V,E)$ a simple, connected, undirected graph with fixed
order $\left\vert V\right\vert =n\ $ and fixed size $\left\vert E\right\vert
=m.$ Denote by $\pi =(d_{1},d_{2},..,d_{n})$ the degree sequence of $G,$
being $d_{i}$ the degree of vertex $v_{i}$, arranged in nonincreasing order $%
d_{1}\geq d_{2}\geq \cdots \geq d_{n}$. We recall that the sequences of
integers which are degree sequences of a simple graph were characterized by
Erd\"{o}s and Gallay (see \cite{Erdos}). The second Zagreb index is defined
as 
\begin{equation*}
S(G)\underset{\left( v_{i},v_{j}\right) \in E}{=\sum d_{i}d_{j}}
\end{equation*}%
or equivalently (\cite{Grassi}) 
\begin{equation}
S(G)=\dfrac{\underset{\left( v_{i},v_{j}\right) \in E}{\sum }\left(
d_{i}+d_{j}\right) ^{2}-\overset{n}{\underset{i=1}{\sum }}d_{i}^{3}}{2}.
\label{SG}
\end{equation}%
\noindent In order to compute upper and lower bounds for $S(G)$ we refer to 
\cite{Grassi}, where a methodology based on majorization order was proposed.
Before presenting our results, we briefly describe the procedure we will
follow.

\noindent\ Let $\pi $ be a fixed degree sequence and $\mathbf{x}\in \mathbb{R%
}^{m}$ the vector whose components are $d_{i}+d_{j}$, $(v_{i},v_{j})\in E$.
In \cite{Lovasz} it is shown that 
\begin{equation*}
\underset{\left( v_{i},v_{j}\right) \in E}{\sum }\left( d_{i}+d_{j}\right) =%
\overset{n}{\underset{i=1}{\sum }}d_{i}^{2}
\end{equation*}%
and thus $\sum_{i=1}^{m}x_{i}=\sum_{i=1}^{n}d_{i}^{2}$ is a constant. Given
a suitable subset $S$ of 
\begin{equation*}
\Sigma _{a}=\left\{ \mathbf{x\in \mathbb{R}}^{m}:x_{1}\geq x_{2}\geq ...\geq
x_{m}\geq 0,\sum_{i=1}^{m}x_{i}=a\right\} ,
\end{equation*}%
where $a=\sum_{i=1}^{n}d_{i}^{2},$ the Schur-convex function $f(\mathbf{x})=%
\overset{m}{\underset{i=1}{\sum }}x_{i}^{2}$ attains its minimum and maximum
on $S$ at $f(x_{\ast }(S))$ and $f(x^{\ast }(S))$ respectively$,$ being $%
x_{\ast }(S)$ and $x^{\ast }(S)$ the extremal vectors of $S$ with respect to
the majorization order (see \cite{Marshall}). Hence from (\ref{SG}) the
maximum and the minimum of $S(G)$ can be easily deduced. 


\noindent Let $C_{\pi }$ be the class of graphs $G=(V,E)$ with $h$ pendant
vertices and degree sequence 
\begin{equation*}
\pi =(\underset{n-h}{\underbrace{d_{1},d_{2},..,d_{n-h-1},d_{n-h}}},\underset%
{h}{\underbrace{1,...,1}}),\quad n\geq 4,n-h\geq 2,h\geq 1
\end{equation*}%
and let us consider graphs $G\in C_{\pi }$ with maximum vertex degree upper
bounded by $d_{n-h}+d_{n-h-1}$, i.e. $d_{1}<$ $d_{n-h}+d_{n-h-1}$, or
equivalently 
\begin{equation}
1+d_{1}\leq d_{n-h}+d_{n-h-1}.  \label{eq:limitazione}
\end{equation}%
\noindent For $G\in C_{\pi }$, we note that this constraint is always
satisfied, for example, if the maximum vertex degree is at most three, as
for some graphs of chemical interest where the maximum degree is four.

\noindent We observe that for $i,j=1,...,n-h$ and $(v_{i},v_{j})\in E:$%
\begin{equation*}
d_{n-h}+d_{n-h-1}\leq d_{i}+d_{j}\leq d_{1}+d_{2},
\end{equation*}%
\noindent while for $i=n-h+1,...,n;$ $\ \ \ j=1,...,n-h$ $\ $and $%
(v_{i},v_{j})\in E:$%
\begin{equation*}
1+d_{n-h}\leq d_{i}+d_{j}\leq 1+d_{1}.
\end{equation*}

\noindent Furthermore, inequality (\ref{eq:limitazione}) assures that the
above intervals are concatenated so that the vector $\mathbf{x}\in 
\mathbb{R}
^{m}$ \ can be arranged in nonincreasing order with the $h$ pendant vertices
in the last $h$ positions.

\noindent Setting $m_{1}=d_{n-h}+d_{n-h-1}$, $\ \ m_{2}=1+d_{n-h}$, $\ \
M_{1}=d_{1}+d_{2}$, \ $M_{2}=1+d_{1},$ let us consider the set%
\begin{equation*}
S_{a}^{m-h}=%
\begin{array}{c}
\Sigma _{a}\cap \left\{ \mathbf{x}\in \mathbb{R}^{n}:M_{1}\geq x_{1}\geq
...x_{m-h}\geq m_{1},\right. \\ 
\left. M_{2}\geq x_{m-h+1}\geq ...x_{m}\geq m_{2}\right\}%
\end{array}%
.
\end{equation*}

\noindent Applying Corollaries \ref{cor:dueintervalli} and \ref%
{dueintervalliminimo} we can compute maximal and minimal elements of $%
S_{a}^{m-h}$ with respect to the majorization order and from (\ref{SG}) we
obtain: 
\begin{equation}
\frac{\left\Vert x_{\ast }(S_{a}^{m-h})\right\Vert _{2}^{2}-\overset{n}{%
\underset{i=1}{\sum }}d_{i}^{3}}{2}\leq S(G)\leq \frac{\left\Vert x^{\ast
}(S_{a}^{m-h})\right\Vert _{2}^{2}-\overset{n}{\underset{i=1}{\sum }}%
d_{i}^{3}}{2},  \label{bounds}
\end{equation}%
where $\left\Vert \cdot \right\Vert _{2}$ stands for the euclidean norm.

\noindent In spite of inequalities (\ref{confronto}) and (\ref{confronto1}),
these bounds can't be worse than those in \cite{Grassi}, and they are often
sharper.

It is noteworthy that both equalities in (\ref{bounds}) are attained if and
only if the set $S_{a}^{\left[ h\right] }$ reduces to a singleton, that is,
by Remark \ref{rem:m=M}$,$ $m_{i}=M_{i},i=1,2.$

The condition $m_{2}=1+d_{n-h}=M_{2}=1+d_{1}$ implies that in $G(V,E)$ all
non-pendant vertices have the same degree. Some examples of this kind of
graphs are:

$i)$ all trees with degree sequence 
\begin{equation}
\pi =\left( \underset{r}{\underbrace{k,...,k}},\underset{rk-2r+2}{%
\underbrace{1,...,1}}\right) ,  \label{PC}
\end{equation}%
including, as particular case, for $k=2$, the path.

$ii)$ graphs obtained by adding the same number $s$ of pendant vertices to
each vertex of a $k-$regular graph on $r$ vertices, being $kr$ even, $2\leq
k\leq r-1,$ i.e. 
\begin{equation*}
\pi =\left( \underset{r}{\underbrace{k+s,...,k+s}},\underset{sr}{\underbrace{%
1,...,1}}\right) .
\end{equation*}

Computing $S(G)$, from Remark \ref{rem:m=M} and (\ref{bounds}), we get $%
k\left( 2kr-2r-k+2\right) $ and $\frac{1}{2}r\left( 2s+ks+k^{2}\right)
\left( k+s\right) $ respectively.


\noindent In the following we provide some significant examples, computing
bounds for graphs belonging to $C_{\pi }$ and satisfying (\ref%
{eq:limitazione}). Furthermore, a comparison with some other known bounds
(see \cite{Bollobas}, \cite{Das-Gut-09}, \cite{Lu}, \cite{Yan} and \cite%
{Zhao}) are provided.

\vskip0.5cm \noindent \textbf{Example 1.} Let us consider the classes of
trees $T_{t,s}$ with degree sequences $\pi _{i}$ $(i=1,2,3)$ given by:

\begin{description}
\item[i)] $\pi _{1}=\left( t,\underset{t}{\underbrace{s,....,s}},\underset{%
t\left( s-1\right) }{\underbrace{1,....,1}}\right), \,\, 2\leq s<t<2s $

\item[ii)] $\pi _{2}=\left( \underset{t}{\underbrace{s,....,s}},t\underset{%
t\left( s-1\right) }{,\underbrace{1,....,1}}\right), \,\, s >t \geq 2 $

\item[iii)] $\pi _{3}=\left( \underset{t+1}{\underbrace{t,....,t}},\underset{%
t(t-1)}{\underbrace{1,...,1}}\right), \,\, t \geq 2 $
\end{description}

\noindent Case $i)$.

\begin{center}
\begin{tabular}{|c|c|}
\hline
$M_1=t+s$ & $m_1=2s$ \\ \hline
$M_2=t+1$ & $m_2=s+1$ \\ \hline
$m=ts$ & $h=t(s-1)$ \\ \hline
\end{tabular}
\end{center}

\noindent Applying Corollary \ref{cor:dueintervalli} and Remark \ref%
{rem:a=a*} it follows that: 
\begin{equation*}
x^{\ast }\left( T_{t,s}\right) =\left[ \underset{t}{\underbrace{\left(
t+s\right) ,.....,\left( t+s\right) }},\underset{st-t}{\underbrace{\left(
s+1\right) ,....,\left( s+1\right) }}\right]
\end{equation*}%
\noindent while from (\ref{m_1 le M_2}), (\ref{M_2<m_1}) and Remark \ref%
{re:componenti intere} we get 
\begin{equation}
x_{\ast }\left( T_{t,s}\right) =\left\{ 
\begin{array}{cc}
\left[ \underset{t}{\underbrace{2s,....,2s}},\underset{t(t-s)}{\underbrace{%
s+2,.....,s+2},}\underset{t(2s-t-1)}{\underbrace{s+1,.....,s+1}}\right] & 
\text{ if }t<2s-1 \\ 
\left[ \underset{t}{\underbrace{2s,....,2s}},\underset{st-t}{\underbrace{%
s+2,.....,s+2}}\right] & \text{ if }t=2s-1%
\end{array}%
\right. .  \label{eq:minimo1}
\end{equation}

\noindent Taking into account (\ref{bounds}), the following inequalities
hold:%
\begin{equation}
\left\{ 
\begin{array}{cc}
\frac{1}{2}t\left( 3t-t^{2}-5s+2st+3s^{2}\right) \leq S(T_{t,s})\leq
ts\left( s+t-1\right) & \text{ if }t<2s-1 \\ 
\frac{1}{2}\left( 2s-1\right) \left( 3s+3s^{2}-4\right) \leq S(T_{t,s})\leq
s\left( 2s-1\right) \left( 3s-2\right) & \text{ if }t=2s-1%
\end{array}%
\right. .  \label{stima1}
\end{equation}

\noindent We note that in (\ref{eq:minimo1}) the right-hand equality holds
if $T_{t,s}$ is the tree obtained by the union of $t$ stars, each one of
order (see Figure 1).

\begin{figure}[!htb]
\centering \includegraphics[height=5cm]{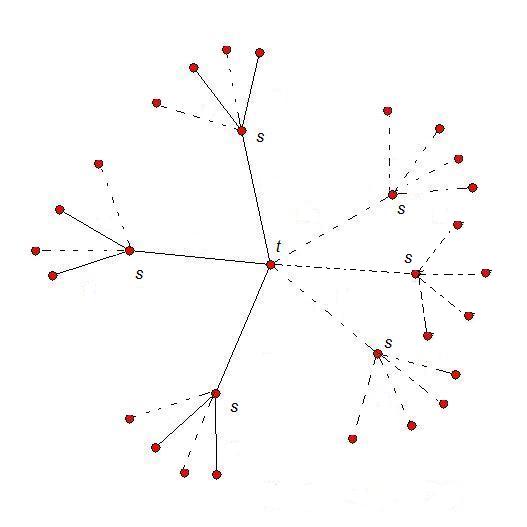}
\caption{Example illustrating tree $T_{t,s}$ for $2\leq s<t<2s $. }
\label{fig:Star1}
\end{figure}

\noindent Case $ii)$.

\begin{center}
\begin{tabular}{|c|c|}
\hline
$M_1=2s$ & $m_1=t+s$ \\ \hline
$M_2=s+1$ & $m_2=t+1$ \\ \hline
$m=ts$ & $h=t(s-1)$ \\ \hline
\end{tabular}
\end{center}

By Corollary \ref{cor:dueintervalli} follows 
\begin{equation*}
x^{\ast }\left( T_{t,s}\right) =\left[ \underset{t}{\underbrace{2s,....,2s}},%
\underset{st-2t}{\underbrace{\left( s+1\right) ,....,\left( s+1\right) }},%
\underset{t}{\underbrace{\left( t+1\right) ,....,\left( t+1\right) }}\right]
\end{equation*}

\noindent while from Remark (\ref{remark:a=atilde}) we get 
\begin{equation*}
x_{\ast }\left( T_{t,s}\right) =\left[ \underset{t}{\underbrace{s+t,....,s+t}%
},\underset{st-t}{\underbrace{(s+1),....,(s+1)}}\right] .
\end{equation*}

\noindent Taking into account (\ref{bounds}), the following inequalities
hold:%
\begin{equation}
ts(s+t-1)\leq S(T_{t,s})\leq t(t-2s+2s^{2})  \label{stima2}
\end{equation}

\noindent We note that the left-hand equality holds if $T_{t,s}$ is the tree
obtained by the union of $t$ stars each one of order $s$ (see Figure 2). 
\begin{figure}[tbh]
\centering \includegraphics[height=7cm]{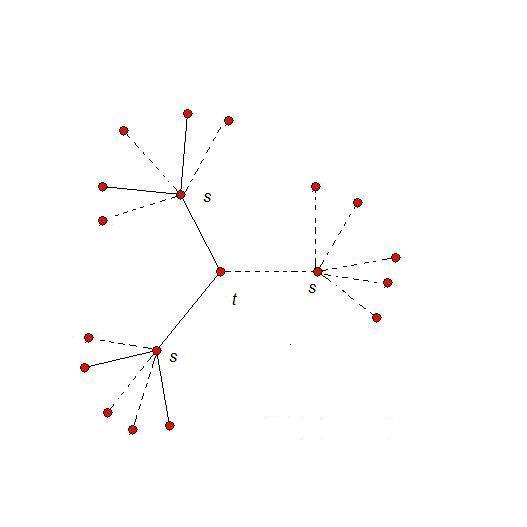}
\caption{Example illustrating tree $T_{t,s}$ for $s>t\geq 2$. }
\label{fig:Star2}
\end{figure}

\noindent Case $iii)$. This is a particular case of (\ref{PC}), for $k=t$
and $r=t+1$, such that 
\begin{equation}
S(T_{t,s})=2t^{3}-t^{2}.  \label{stima3}
\end{equation}

\noindent Finally we observe that for the class of trees with degree
sequence $\pi _{1},\pi _{2}$ or $\pi _{3}$, our upper bounds always perform
better than those in \cite{Das-Gut-09}. Indeed, in the presence of pendant
vertices and with $m=ts$ and $n=ts+1,$ the bound in \cite{Das-Gut-09}
becomes: 
\begin{equation}
S(G)\leq 2m^{2}-(n-1)m=t^{2}s^{2}  \label{eq:SGDasGut}
\end{equation}%
which is always greater than the upper bound in (\ref{stima1}), (\ref{stima2}%
), (\ref{stima3}). \newpage

\vskip0.5cm \noindent \textbf{Example 2}. Let us consider a unicyclic graph $%
G$, i.e. a graph with $n=m$ having the following degree sequence 
\begin{equation*}
\pi =(3,3,3,3,2,2,2,2,2,1,1,1,1).
\end{equation*}%
Being (\ref{eq:limitazione}) satisfied, by Remark \ref{re:componenti intere}%
, (\ref{bounds}) gives%
\begin{equation*}
64\leq S(G)\leq 74.
\end{equation*}

\noindent The comparison (see Table 1) with bounds in \cite{Bollobas}, \cite%
{Das-Gut-09} , \cite{Grassi}, \cite{Lu} and \cite{Yan} shows that our bounds
always perform better. Indeed we obtain:

\begin{table}[!ht]
\centering
\begin{tabular}{|l|l|l|}
\hline
Bounds & Lower & Upper \\ \hline
ours & 64 & 74 \\ \hline
\cite{Bollobas} & x & 277.9 \\ \hline
\cite{Das-Gut-09} & x & 182 \\ \hline
\cite{Grassi} & 61.462 & 77 \\ \hline
\cite{Lu} & -28 & 76 \\ \hline
\cite{Yan} & 64 & 92 \\ \hline
\end{tabular}%
\caption{Lower and upper bounds for $S(G)$ }
\end{table}

\vskip0.5cm \noindent \textbf{Example 3}. Consider the graphs $G$ and $H$
with degree sequences $\pi _{1}=(3,2,2,1)$ and $\pi _{2}=(3,3,3,3,2,1,1)$
respectively, as in Examples 2.2 and 2.3 in \cite{Grassi}. Besides the
bounds discussed in \cite{Grassi}, we add the comparison with those in \cite%
{Das-Gut-09}, \cite{Yan} and \cite{Zhao}. Observing that $G$ is a unicyclic
graph $(m=n)$ and $H$ is a bicyclic graph $(m=n+1)$, both with pendant
vertices, bounds in \cite{Yan} and \cite{Zhao} can also be respectively
properly applied. \noindent Computing bounds for $S(G)$, we have:

\begin{table}[!th]
\centering
\begin{tabular}{|l|l|l|}
\hline
R$\text{ef.}$ & L$\text{ower}$ & U$\text{pper}$ \\ \hline
$\text{ours}$ & $19$ & $20$ \\ \hline
\cite{Bollobas} & $\text{x}$ & $22.511$ \\ \hline
\cite{Das-Gut-09} & $\text{x}$ & $20$ \\ \hline
\cite{Grassi} & $18.5$ & $20$ \\ \hline
\cite{Lu} & $18$ & $22$ \\ \hline
\cite{Yan} & $19$ & $19$ \\ \hline
\end{tabular}%
\caption{Lower and upper bounds for $\ S(G)$. }
\end{table}

\newpage

\noindent Our bounds are sharper than \cite{Bollobas}, \cite{Grassi} and 
\cite{Lu}. The best one is provided by \cite{Yan} and has been specifically
constructed for this class of graph.

\noindent Computing bounds for $S(H)$, we have:

\begin{table}[th]
\centering
\begin{tabular}{|l|l|l|}
\hline
$\text{ref.}$ & $\text{lower}$ & $\text{upper}$ \\ \hline
our & $54$ & $58$ \\ \hline
\cite{Bollobas} & $\text{x}$ & $99.75$ \\ \hline
\cite{Das-Gut-09} & $\text{x}$ & $80$ \\ \hline
\cite{Grassi} & $51.25$ & $58$ \\ \hline
\cite{Lu} & $40$ & $59$ \\ \hline
\cite{Zhao} & $50$ & $68$ \\ \hline
\end{tabular}%
\caption{Lower and upper bounds for $S(H)$. }
\end{table}

\noindent Note that our bounds perform better than all the others and in
particular better than \cite{Zhao} which is properly designed for bicyclic
graphs as $H$ is.

\section{Conclusion}

The purpose of this paper is to establish maximal and minimal vectors with
respect to the majorization order under sharper constraints than those
presented by Marshall and Olkin in \cite{Marshall}. We have shown how these
results can provide a simple methodology for localizing the second Zagreb
index of a particular class of graphs. Some numerical examples have been
discussed, showing that our bounds often provide sharper bounds than those
in the literature. Moreover, in network analysis, there are a variety of
potential applications for this kind of approach, considering other
topological indices which can be defined by a suitable Schur-convex
function. \newpage 

\end{document}